\newtheorem{theorem}{Theorem}
\newtheorem{lemma}[theorem]{Lemma}
\newtheorem{corollary}[theorem]{Corollary}
\newtheorem*{ack}{Acknowledgements}
\newcommand{\Z}{\mathbb{Z}}
\newcommand{\F}{\mathbb{F}}
\newcommand{\Q}{\mathbb{Q}}
\newcommand{\C}{\mathbb{C}}
\newcommand{\la}{\langle}
\newcommand{\ra}{\rangle}
\newcommand{\Gal}{{\rm Gal}}
\newcommand{\AGL}{{\rm AGL}}
\newcommand{\GL}{{\rm GL}}
\newcommand{\ord}{{\rm ord}}
\newcommand{\artin}[2]{\genfrac{[}{]}{}{}{#1}{#2}}
\begin{document}

\title[The Somos-5 sequence]{The density of primes dividing a term in the Somos-5 sequence}
\author{Bryant Davis}
\author{Rebecca Kotsonis}
\author{Jeremy Rouse}
\address{Department of Mathematics, Wake Forest University, Winston-Salem, NC 27109}
\email{davibf11@wfu.edu}
\email{kotsrc11@wfu.edu}
\email{rouseja@wfu.edu}
\subjclass[2010]{Primary 11G05; Secondary 11F80}

\begin{abstract}
The Somos-5 sequence is defined by $a_{0} = a_{1} = a_{2} = a_{3} = a_{4} = 1$
and $a_{m} = \frac{a_{m-1} a_{m-4} + a_{m-2} a_{m-3}}{a_{m-5}}$ for $m \geq 5$. We
relate the arithmetic of the Somos-5 sequence to the elliptic curve $E : y^{2} + xy = x^{3} + x^{2} - 2x$ and use properties of Galois representations attached to $E$ to prove the density of primes $p$ dividing
some term in the Somos-5 sequence is equal to $\frac{5087}{10752}$.
\end{abstract}

\maketitle

\section{Introduction and Statement of Results}

There are many results in number theory that relate to a determination of
the primes dividing some particular sequence. For example, it well-known
that if $p$ is a prime number, then $p$ divides some term of the Fibonacci
sequence, defined by $F_{0} = 0$, $F_{1} = 1$, and $F_{n} = F_{n-1} + F_{n-2}$
for $n \geq 2$. Students in elementary number theory learn
that a prime $p$ divides a number of the form $n^{2} + 1$ if and only if
$p = 2$ or $p \equiv 1 \pmod{4}$.

In 1966, Hasse proved in \cite{Hasse} that if $\pi_{{\rm even}}(x)$ is the
number of primes $p \leq x$ so that $p | 2^{n} + 1$ for some $n$, then
\[
  \lim_{x \to \infty} \frac{\pi_{{\rm even}}(x)}{\pi(x)} = \frac{17}{24}.
\]
Note that a prime number $p$ divides $2^{n} + 1$ if and only if
$2$ has even order in $\F_{p}^{\times}$.

A related result is the following. The Lucas numbers are defined by
$L_{0} = 2$, $L_{1} = 1$ and $L_{n} = L_{n-1} + L_{n-2}$ for $n \geq
2$. In 1985, Lagarias proved (see \cite{Lagarias} and
\cite{Lagarias2}) that the density of primes dividing some Lucas number
is $2/3$. Given a prime number $p$, let $Z(p)$ be the smallest integer $m$
so that $p | F_{m}$. A prime $p$ divides $L_{n}$ for some $n$ if and only if
$Z(p)$ is even. In \cite{CubreRouse}, Paul Cubre and the third author
prove a conjecture of Bruckman and Anderson on the density of primes $p$
for which $m | Z(p)$, for an arbitrary positive integer $m$.

In the early 1980s, Michael Somos discovered integer-valued 
non-linear recurrence sequences. The Somos-$k$ sequence is defined by
$c_{0} = c_{1} = \cdots = c_{k-1} = 1$ and
\[
  c_{m} = \frac{c_{m-1} c_{m-(k-1)} + c_{m-2} c_{m-(k-2)} + \cdots
  + c_{m-\lfloor \frac{k}{2} \rfloor} c_{m-\lceil \frac{k}{2} \rceil}}{c_{m-k}}
\]
for $m \geq k$. Despite the fact that division is involved in the definition
of the Somos sequences, the values $c_{m}$ are integral for $4 \leq k \leq 7$.
Fomin and Zelevinsky \cite{FZ} show that the introduction of parameters
into the recurrence results in the $c_{m}$ being Laurent polynomials
in those parameters. Also, Speyer \cite{SP} gave a combinatorial interpretation
of the Somos sequences in terms of the number of perfect matchings in
a family of graphs. 

Somos-4 and Somos-5 type sequences are also connected with the
arithmetic of elliptic curves (a connection made quite explicit by
A. N. W. Hone in \cite{Hone1}, and \cite{Hone2}). If $a_{n}$ is the
$n$th term in the Somos-4 sequence, $E : y^{2} + y = x^{3} - x$ and
$P = (0,0) \in E(\Q)$, then the $x$-coordinate of the denominator of
$(2n-3)P$ is equal to $a_{n}^{2}$.  It follows from this that
$p | a_{n}$ if and only if $(2n-3)P$ reduces to the identity in
$E(\F_{p})$, and so a prime $p$ divides a term in the Somos-4 sequence
if and only if $(0,0) \in E(\F_{p})$ has odd order. In
\cite{JonesRouse}, Rafe Jones and the third author prove that the
density of primes dividing some term of the Somos-4 sequence is
$\frac{11}{21}$. The goal of the present paper is to prove an
analogous result for the Somos-5 sequence.

Let $\pi'(x)$ denote the number of primes $p \leq x$ so that $p$ divides
some term in the Somos-5 sequence. We have the following table of data.

\begin{center}
\begin{tabular}{c|cc}
$x$ & $\pi'(x)$ & $\frac{\pi'(x)}{\pi(x)}$\\
\hline
$10$ & $3$ & $0.750000$\\
$10^{2}$ & $12$ & $0.480000$\\
$10^{3}$ & $83$ & $0.494048$\\
$10^{4}$ & $588$ & $0.478438$\\
$10^{5}$ & $4539$ & $0.473207$\\
$10^{6}$ & $37075$ & $0.472305$\\
$10^{7}$ & $314485$ & $0.473209$\\
$10^{8}$ & $2725670$ & $0.473087$\\
$10^{9}$ & $24057711$ & $0.473134$\\
$10^{10}$ & $215298607$ & $0.473129$
\end{tabular}
\end{center}

Our main result is the following.
\begin{theorem}
\label{main}
We have
\[
  \lim_{x \to \infty} \frac{\pi'(x)}{\pi(x)} = \frac{5087}{10752}
  \approx 0.473121.
\]
\end{theorem}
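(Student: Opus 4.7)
The plan is to follow the template of \cite{JonesRouse} for the Somos-4 sequence, noting in advance that the shape of the predicted denominator $10752 = 2^{9}\cdot 3 \cdot 7$ already tells us that the $2$-adic image of Galois on $E$ will play a far more delicate role here than it did there. The starting point is Hone's identification of the Somos-5 sequence with elliptic data (\cite{Hone1}, \cite{Hone2}). Concretely, one locates a rational point $P \in E(\Q)$ of infinite order and a second rational point $R$ (reflecting the offset built into odd Somos sequences), so that $a_{n}$ is, up to a controlled unit, the denominator of the $x$-coordinate of $nP + R \in E(\Q)$. Under this identification, a prime $p$ of good reduction divides some term of the sequence if and only if the reduction of $R$ lies in the cyclic subgroup $\la P \ra \subseteq E(\F_{p})$.

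I would then translate this into Galois-theoretic data. For each prime $\ell$ the relevant input is the $\ell$-adic representation $\rho_{E,\ell}:\Gal(\overline{\Q}/\Q) \to \GL_{2}(\Z_{\ell})$ together with the Kummer cocycle $\kappa_{P}$ associated to $P$, which together take values in the semidirect product $\GL_{2}(\Z_{\ell}) \ltimes \Z_{\ell}^{2}$. The event ``$R \in \la P \ra$ in $E(\F_{p})$'' becomes a condition on the image of Frobenius at $p$ in this group, and by the Chebotarev density theorem the desired density equals the normalized Haar measure of an explicit subset $S$ of the image of the joint representation $\rho_{E} \times \kappa_{P}$ inside $\prod_{\ell} (\GL_{2}(\Z_{\ell}) \ltimes \Z_{\ell}^{2})$.

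The main obstacle is determining this image. For $\ell \notin \{2,3,7\}$ the image should be as large as possible, so that the local factors combine into a clean Euler product; establishing this amounts to Serre's open image theorem together with finite computations ruling out sporadic exceptional mod-$\ell$ behaviour. For $\ell = 3$ and $\ell = 7$ one must verify the exact mod-$\ell$ image, accounting for the factors of $3$ and $7$ in the denominator. The serious work is at $\ell = 2$: the rational $2$-torsion point $(0,0) \in E(\Q)$ forces the mod-$2$ image into a Borel subgroup; the Kummer image of $P$ inside $\Z_{2}^{2}$ may be non-maximal; and entanglement can occur between $\Q(E[2^{k}])$ and the Kummer extensions $\Q(E[2^{k}], \tfrac{1}{2^{k}}P)$. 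Pinning down the exact $2$-adic image (for instance via the tables of Rouse--Zureick-Brown) and the precise Kummer image up to a sufficiently high level is where the calculation is hardest. Once these images are known, I would compute the local Haar measure at $2$ directly, combine it with the contributions at $3$, at $7$, and at the generic primes, and simplify to obtain $\tfrac{5087}{10752}$.
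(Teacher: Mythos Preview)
Your proposal has a genuine conceptual misreading that would send the computation in the wrong direction. You factor $10752 = 2^{9}\cdot 3\cdot 7$ and conclude that the mod-$3$ and mod-$7$ Galois images must each contribute a nontrivial local factor. In fact the condition ``$p$ divides some $a_m$'' is equivalent to the $2$-torsion point $Q=(0,0)$ lying in $\la P\ra\subseteq E(\F_p)$, and because $Q$ has order $2$, this is a purely $2$-adic statement: via the $2$-isogeny $\phi:E\to E'$ with kernel $\{0,Q\}$ it becomes the assertion that $\ord_2|P|=1+\ord_2|\phi(P)|$, which involves only the $2$-part of the orders. No odd prime $\ell$ enters at all, and in particular the conductor of $E$ is $102=2\cdot 3\cdot 17$, so $7$ is not even a prime of bad reduction. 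The factors $3$ and $7$ in the final denominator arise entirely inside the $2$-adic count: a geometric series $\sum_{r\ge 1}4^{-r}=1/3$ appears when summing over lifts in $M_3(\Z/2^{k}\Z)$, and a factor $1/63=1/(2^{6}-1)$ appears from the self-similar treatment of the zero matrix modulo $2$.

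Concretely, the paper never touches $\rho_{E,\ell}$ for odd $\ell$. It works with the single representation $\rho_{E,2^{k}}:\Gal(K_k/\Q)\to\AGL_2(\Z/2^{k}\Z)$ encoding both $E[2^{k}]$ and the Kummer data of $P$, determines its image $I_k$ exactly (index $6$ in $\GL_2$ from \cite{RouseDZB}, plus one Kummer congruence tying the parity of $\vec v$ to $\det M\bmod 8$), and then reformulates ``$|P|=2|R|$'' as a condition on three $2\times 2$ minors $A,B,C$ of $I-M$, namely $\ord_2(B)<\min(\ord_2(A),\ord_2(C))$. The density is then computed by an explicit recursive Haar-measure argument on $I_k$ as $k\to\infty$. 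Your plan to look for nontrivial behaviour at $\ell=3$ and $\ell=7$ would find none, and the Euler-product framework would collapse to the single $2$-adic factor; the hard part --- identifying the exact $2$-adic image and carrying out the measure computation there --- is precisely what the paper does and what your sketch leaves entirely unspecified.
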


The Somos-5 sequence is related to the coordinates of rational points
on the elliptic curve $E : y^{2} + xy = x^{3} + x^{2} - 2x$. This curve has $E(\Q) \cong \Z \times \Z/2\Z$ and generators are $P = (2,2)$ (of infinite order) and $Q = (0,0)$ (of order $2$). We have (see Lemma~\ref{somos5form}) that
\[
  mP + Q = \left(\frac{a_{m+2}^{2} - a_{m} a_{m+4}}{a_{m+2}^{2}},
  \frac{4 a_{m} a_{m+2} a_{m+4} - a_{m}^{2} a_{m+6} - a_{m+2}^{3}}{a_{m+2}^{3}}\right).
\]
It follows that a prime $p$ divides a term in the Somos-5 sequence if and only
if the reduction of $Q$ modulo $p$ is in
$\langle P \rangle \subseteq E(\F_{p})$. Another way of stating this is the following: there is a $2$-isogeny $\phi : E \to E'$, where $E' : y^{2} + xy =
x^{3} + x^{2} + 8x + 10$ and
\[
  \phi(x,y) = \left(\frac{x^{2} - 2}{x}, \frac{x^{2} y + 2x + 2y}{x^{2}}\right).
\]
The kernel of $\phi$ is $\{ 0, Q \}$. Letting $R = \phi(P)$ we show
(see Theorem~\ref{somosorder}) that $p$ divides some term in the Somos-5 sequence
if and only if the order of $P$ in $E(\F_{p})$ is twice that of
$R$ in $E'(\F_{p})$.

A result of Pink (see Proposition 3.2 on page 284 of \cite{Pink})
shows that the $\ell$-adic valuation of the order of a point $P
\pmod{p}$ can be determined from a suitable Galois representation
attached to an elliptic curve. For a positive integer $k$, we let
$K_{k}$ be the field obtained by adjoining to $\Q$ the $x$ and $y$
coordinates of all points $\beta_{k}$ with $2^{k} \beta_{k} = P$. There is
a Galois representation $\rho_{E,2^{k}} : \Gal(K_{k}/\Q) \to \AGL_{2}(\Z/2^{k} \Z)$
and we relate the power of $2$ dividing the order of $P$ in $E(\F_{p})$
to $\rho_{E,2^{k}}(\sigma_{p})$, where $\sigma_{p}$ is a Frobenius
automorphism at $p$ in $\Gal(K_{k}/\Q)$. Using the isogeny $\phi$ we are able
to relate $\rho_{E,2^{k}}(\sigma_{p})$ and $\rho_{E',2^{k-1}}(\sigma_{p})$,
obtaining a criterion that indicates when $p$ divides some term in the Somos-5
sequence. We then determine the image of $\rho_{E,2^{k}}$ for all $k$.

Once the image of $\rho_{E,2^{k}}$ is known, the problem of computing
the fraction of elements in the image with the desired properties is
quite a difficult one. We introduce a new and simple method for computing
this fraction and apply it to prove Theorem~\ref{main}.

\begin{ack}
The first and second authors thank the Wake Forest
Undergraduate Research and Creative Activities Center for financial support.
The authors used Magma \cite{Magma} version 2.20-6 for computations.
\end{ack}

\section{Background}

If $E/F$ is an elliptic curve given in the form
$y^{2} + a_{1} xy + a_{3} y = x^{3} + a_{2} x^{2} + a_{4} x + a_{6}$,
the set $E(F)$ has the structure of an abelian group. Specifically, if
$P, Q \in E(F)$, let $R = (x,y)$ be the third point of intersection
between $E$ and the line through $P$ and $Q$. We define
$P+Q = (x,-y-a_{1} x - a_{3})$.  The multiplication by $m$ map on an
elliptic curve has degree $m^{2}$, and so if $E/\C$ is an elliptic
curve and $\alpha \in E(\C)$, then there are $m^{2}$ points $\beta$ so
that $m \beta = \alpha$. 

If $K/\mathbb{Q}$ is a finite extension, let $\mathcal{O}_K$ denote the ring of algebraic integers in $K$. A prime $p$ ramifies in $K$ if $p\mathcal{O}_K = \prod_{i=1}^{r} \mathfrak{p}_i^{e_i}$ and some $e_i > 1$, where the $\mathfrak{p}_i$ are distinct prime ideals of $\mathcal{O}_K$.

Suppose $K/\Q$ is Galois, $p$ is a prime number that does not ramify
in $K$, and $p\mathcal{O}_K = \prod_{i=1}^{g} \mathfrak{p}_i$. For each
$i$, there is a unique element $\sigma \in \Gal(K/\Q)$ for which
\[
  \sigma(\alpha) \equiv \alpha^{p} \pmod{\mathfrak{p}_{i}}
\]
for all $\alpha \in \mathcal{O}_{K}$. This element is called the
Artin symbol of $\mathfrak{p}_{i}$ and is denoted 
$\artin{K/\Q}{\mathfrak{p}_{i}}$. If $i \neq j$,
$\left[ \frac{K/\mathbb{Q}}{\mathfrak{p}_i} \right]$ and
$\left[ \frac{K/\mathbb{Q}}{\mathfrak{p}_j} \right]$ are conjugate in
$\Gal(K/\Q)$ and
$\artin{K/\Q}{p} := \left\{ \left[
    \frac{K/\mathbb{Q}}{\mathfrak{p}_i} \right] : 1 \leq i \leq g
\right\}$ is a conjugacy class in $\Gal(K/\mathbb{Q})$.

The key tool we will use in the proof of Theorem~\ref{main} is the Chebotarev
density theorem.
\begin{theorem}[\cite{IK}, page 143]
If $C \subseteq \Gal(K/\Q)$ is a conjugacy class, then
\[
  \lim_{x \to \infty}
  \frac{\# \{ p \leq x : p \text{ prime}, \artin{K/\Q}{p} = C \}}{\pi(x)}
  = \frac{|C|}{|\Gal(K/\Q)|}.
\]
\end{theorem}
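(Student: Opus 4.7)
The approach I would take proceeds via Artin L-functions and the Wiener--Ikehara Tauberian theorem, reducing the counting problem to an analytic statement at $s = 1$. First I would use character orthogonality in $G = \Gal(K/\Q)$ to rewrite the indicator function of the conjugacy class $C$ as
\[
  \mathbf{1}_{C}(g) = \frac{|C|}{|G|} \sum_{\chi} \overline{\chi(c)}\, \chi(g),
\]
where $\chi$ runs over irreducible characters of $G$ and $c$ is any fixed element of $C$. Summing this identity against the indicator of unramified primes reduces the theorem to the claim that $\sum_{p} \chi(\mathrm{Frob}_{p})\, p^{-s}$ stays bounded as $s \to 1^{+}$ for each non-trivial irreducible $\chi$, while for $\chi$ trivial it grows like $\log \tfrac{1}{s-1}$ (recovering the prime number theorem for $\Q$).

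Next I would settle the abelian case. When $G$ is abelian every $\chi$ is one-dimensional, and by class field theory (or, for $K \subseteq \Q(\zeta_{n})$, directly by Kronecker--Weber) each $\chi$ corresponds to a Dirichlet character, making $L(s,\chi) = \prod_{p} (1 - \chi(\mathrm{Frob}_{p}) p^{-s})^{-1}$ a Dirichlet L-function. Classical work of Dirichlet, Hadamard, and de la Vall\'ee Poussin supplies holomorphic continuation to $\Re s > 0$ for non-trivial $\chi$ and non-vanishing of $L(1,\chi)$. A standard logarithmic-derivative argument combined with Wiener--Ikehara then yields the claimed density.

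For general $G$, the plan is to invoke Brauer's induction theorem to write every character as a $\Z$-linear combination of characters induced from one-dimensional characters of subgroups $H \leq G$. By Artin's induction formalism this expresses each $L(s,\chi)$ as a product of (integer, possibly negative) powers of Hecke L-functions over the fixed fields $K^{H}$, and those Hecke L-functions have the requisite analytic properties through class field theory over the base $K^{H}$. The main obstacle is precisely non-vanishing on the line $\Re s = 1$ for non-abelian $\chi$: Artin's holomorphy conjecture, which would assert that $L(s,\chi)$ is itself entire for non-trivial irreducible $\chi$, remains open in general, so holomorphy on $\Re s \geq 1$ cannot be invoked directly. Brauer's theorem sidesteps this obstacle by producing an expression as a quotient of Hecke L-functions, which is exactly what is needed to run the Tauberian argument and recover the stated density $|C|/|G|$.
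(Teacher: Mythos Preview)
The paper does not prove this statement at all: it is the classical Chebotarev density theorem, stated with a citation to \cite{IK} and used as a black box in the proof of Theorem~\ref{main}. There is therefore no ``paper's own proof'' to compare against.

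Your sketch is a faithful outline of the standard modern proof (essentially the one given in the cited reference): reduce via character orthogonality to analytic behavior of Artin $L$-functions near $s=1$, handle the abelian case through Dirichlet/Hecke $L$-functions, and in general invoke Brauer induction to express each $L(s,\chi)$ as an integer-power product of abelian Hecke $L$-functions over intermediate fields, so that meromorphic continuation and nonvanishing on $\Re s = 1$ are inherited and a Tauberian theorem (Wiener--Ikehara or an Ikehara--Delange variant) finishes. One small clarification: you do not actually need to know that each individual $L(s,\chi)$ is holomorphic and nonzero at $s=1$ separately; what the Tauberian step requires is control of the logarithmic derivative of the relevant product, and the factorization $\zeta_{K}(s) = \prod_{\chi} L(s,\chi)^{\dim \chi}$ together with the known behavior of $\zeta_{K}$ and $\zeta_{\Q}$ at $s=1$ already forces the needed cancellation. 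With that caveat, the outline is correct.
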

Roughly speaking, each element of $\Gal(K/\Q)$ arises as $\left[ \frac{K/\Q}{\mathfrak{p}} \right]$ equally often. 

Let $E[m] = \{ P \in E : mP = 0 \}$ be the set of points of order dividing
$m$ on $E$. Then $\Q(E[m])/\Q$ is Galois and $\Gal(\Q(E[m])/\Q)$ is isomorphic
to a subgroup of ${\rm Aut}(E[m]) \cong \GL_{2}(\Z/m\Z)$. Moreover,
Proposition V.2.3 of \cite{Silverman} implies that if $\sigma_p$ is a Frobenius automorphism at some prime above $p$ and $\tau : \Gal(\Q(E[m])/\Q) \rightarrow \GL_{2}(\Z/m \Z)$, then ${\rm tr}~\tau(\sigma_{p}) \equiv p+1 - \# E(\F_{p}) \pmod{m}$ and $\det(\tau(\sigma_{p})) \equiv p \pmod{m}$. Another
useful fact is the following. If $K/\Q$ is a number field, $\mathfrak{p}$
is a prime ideal in $\mathcal{O}_{K}$ above $p$, $\gcd(m,p) = 1$ and $P \in E(K)[m]$ is not the identity, then $P$ does not reduce to the identity in
$E(\mathcal{O}_{K}/\mathfrak{p})$. This is a consequence of
Proposition VII.3.1 of \cite{Silverman}.

We will construct Galois representations attached to elliptic curves
with images in $\AGL_{2}(\Z/2^{k} \Z) \cong (\Z/2^{k} \Z)^{2} \rtimes
\GL_{2}(\Z/2^{k} \Z)$. Elements of such a group can be thought of
either as pairs $(\vec{v}, M)$, where $\vec{v}$ is a row vector,
and $M \in \GL_{2}(\Z/2^{k} \Z)$, or as $3 \times 3$ matrices
$\begin{bmatrix} a & b & 0 \\ c & d & 0 \\ e & f & 1 \end{bmatrix}$,
where $\vec{v} = \begin{bmatrix} e & f \end{bmatrix}$ and $M
= \begin{bmatrix} a & b \\ c & d \end{bmatrix}$. In the former notation,
the group operation is given by
\[
(\vec v_1, M_1)*(\vec v_2, M_2) = (\vec v_1 + \vec v_2M_1, M_2M_1).
\]

\section{Connection between the Somos-5 sequence and $E$}

\begin{lemma}
\label{somos5form}
Define $P = (2,2)$ and $Q = (0,0)$ on $E : y^{2} + xy = x^{3} + x^2 - 2x$. For all $m \geq 0$, we have the following relationship between the Somos-5 sequence and $E$:
\begin{align*}
mP + Q = \left(\frac{a_{m+2}^2 - a_{m}a_{m+4}}{a_{m+2}^2} , \frac{4a_{m}a_{m+2}a_{m+4} - a_{m}^2 a_{m+6} - a_{m+2}^3}{a_{m+2}^3}\right).
\end{align*}
\end{lemma}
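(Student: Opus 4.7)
I would prove the lemma by strong induction on $m$, combining the chord-tangent addition on $E$ with the Somos-5 recurrence $a_n a_{n-5} = a_{n-1} a_{n-4} + a_{n-2} a_{n-3}$. The first several terms of the sequence are $a_0, \ldots, a_7 = 1, 1, 1, 1, 1, 2, 3, 5$; the base cases $m = 0, 1$ follow by evaluating the right-hand side and computing $0 \cdot P + Q = (0,0) = Q$ and $P + Q = (-1, 2)$ directly from the group law on $E$.

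For the inductive step, denote the claimed right-hand side by $(X_m, Y_m)$, assume $mP + Q = (X_m, Y_m)$, and write $(m+1)P + Q = (X_m, Y_m) + (2, 2)$ using the Weierstrass addition formula on $E$ (whose coefficients are $(a_1, a_2, a_3, a_4, a_6) = (1, 1, 0, -2, 0)$):
\[
\lambda = \frac{Y_m - 2}{X_m - 2}, \quad X_{m+1} = \lambda^2 + \lambda - 3 - X_m, \quad Y_{m+1} = \lambda(X_m - X_{m+1}) - X_{m+1} - Y_m.
\]
Substituting the inductive hypothesis into $\lambda$ and then into the formulas for $X_{m+1}$ and $Y_{m+1}$, clearing denominators, and comparing with the claim
\[
(m+1)P + Q = \left(\frac{a_{m+3}^2 - a_{m+1} a_{m+5}}{a_{m+3}^2}, \frac{4 a_{m+1} a_{m+3} a_{m+5} - a_{m+1}^2 a_{m+7} - a_{m+3}^3}{a_{m+3}^3}\right)
\]
reduces the inductive step to two polynomial identities in $a_m, a_{m+1}, \ldots, a_{m+7}$.

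The main obstacle is verifying these polynomial identities. My plan is to eliminate $a_{m+5}, a_{m+6}, a_{m+7}$ using the Somos-5 recurrence at indices $n = m+5, m+6, m+7$, thereby reducing both sides to polynomials in $a_m, \ldots, a_{m+4}$, and then check equality of coefficients. This step is lengthy but mechanical, and can be handled symbolically in Magma. A more conceptual alternative, which would avoid the direct computation, is to invoke Hone's results in \cite{Hone1} and \cite{Hone2}, where the Somos-5 sequence is identified up to normalization with the denominator sequence for iterates of a non-torsion point shifted by a $2$-torsion point on a curve of exactly this shape; matching Hone's initial data with ours would then pin down the formula uniquely.
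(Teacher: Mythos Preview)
Your approach is correct and follows the same inductive template as the paper, but with one structural difference worth noting: you step from $m$ to $m+1$ by adding $P=(2,2)$, whereas the paper steps from $m$ to $m+2$ by adding $2P=(1,-1)$. The paper's choice keeps all Somos-5 indices that occur in both $mP+Q$ and $(m+2)P+Q$ in the same parity class relative to $m$, and the entire algebraic reduction is then packaged into a single auxiliary relation
\[
F(a_n,a_{n+2},a_{n+4},a_{n+6})=a_n^{2}a_{n+6}^{2}-7a_n a_{n+2} a_{n+4} a_{n+6}+a_n a_{n+4}^{3}+a_{n+2}^{3}a_{n+6}+8a_{n+2}^{2}a_{n+4}^{2}=0,
\]
which is proved once, by its own induction on $n$ using the Somos-5 recurrence. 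Your step-by-one version mixes the two parity classes, so after eliminating $a_{m+5},a_{m+6},a_{m+7}$ via the recurrence you obtain rational (not polynomial) expressions in $a_m,\dots,a_{m+4}$; the verification is bulkier, though still mechanical and well within reach of a computer-algebra check as you indicate. The trade-off: the paper's route isolates a clean quartic identity among every-other terms that does the work twice (once for each coordinate), while yours is more direct and needs no auxiliary lemma. Your alternative of invoking Hone's identification of Somos-5 with an elliptic denominator sequence is also legitimate and would bypass the computation altogether.
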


\begin{proof} We will prove this by strong induction. A straightforward calculation shows that the base cases $m = 0$ and $m = 1$ are true. For simplicity's sake, we will denote $a = a_{m}$, $b = a_{m+1}$, $c = a_{m+2}$, $d = a_{m+3}$, $e = a_{m+4}$, $f = a_{m+5}$ and $g = a_{m+6}$. Our inductive hypothesis is that
\begin{align*}
mP + Q = \left(\frac{c^2 - ae}{c^2} , \frac{4ace - a^2g - c^3}{c^3}\right).
\end{align*}
We will now compute $(m+2)P + Q$. 

To find the $x$ and $y$ coordinates of $(m+2)P + Q$, we add $2P = (1,-1)$
to $mP+Q$. If $w$ is the slope and $v$ is the $y$-intercept, the line between $2P$ and $mP + Q$ is $y = wx + v$ with $w = \frac{ag - 4ce}{ce}$ and $v = \frac{-ag + 3ce}{ce}$. Substituting this into the equation for $E$, we find the $x$-coordinate of $2P + (mP + Q)$ to be $r_x = \frac{a^2g^2 - 7aceg + ae^3 + 9c^2e^2}{c^2e^2}$. A straightforward but lengthy inductive calculation shows that if
\begin{align*}
F(a,c,e,g) = a^2g^2 - 7aceg + ae^3 + c^3g + 8c^2e^2,
\end{align*}
then $F(a_n, a_{n+2}, a_{n+4}, a_{n+6}) = 0$ for all $n$. Since $F(a,c,e,g)=0$, we know that $r_x - F(a,c,e,g) = r_x$. Therefore, we know that $r_x = \frac{-cg + e^2}{e^2}$.

Denote the $y$-coordinate of $(m+2)P + Q$ as $r_y$. We compute that
$r_y = \frac{g(ag - 3ce)}{e^3}$. Using that
$r_{y} = r_{y} - \frac{F(a,c,e,g)}{ae^{3}}$, we find that
$r_y = \frac{4ceg - c^2i - e^3}{e^3}$. Therefore, it is evident that
$$
(m+2)P + Q = \left(\frac{a_{m+4}^2 - a_{m+2}a_{m+6}}{a_{m+4}^2}, \frac{4a_{m+2}a_{m+4}a_{m+6} - a_{m+2}^2a_{m+8} -a_{m+4}^3}{a_{m+4}^3}\right).
$$
\end{proof}

Let $E'$ be given by $E' : y^2 + xy = x^3 + x^2 + 8x + 10$
and let $R = (1,4) \in E'(\Q)$. We have a $2$-isogeny $\phi : E \to E'$
given by
\[
  \phi(x,y) = \left(\frac{x^{2} - 2}{x}, \frac{x^{2} y + 2x + 2y}{x^{2}}\right).
\]

\begin{theorem}
\label{somosorder}
If $p$ is a prime that divides a term in the Somos-5 sequence, the order of $P = (2,2)$ in $E(\mathbb{F}_p)$ is twice the order of $R=(1,4)$ in $E'(\mathbb{F}_p)$. Otherwise, their order is the same.
\end{theorem}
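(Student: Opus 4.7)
The plan is to exploit the fact that the $2$-isogeny $\phi : E \to E'$ is a group homomorphism that becomes a well-defined homomorphism on $\mathbb{F}_p$-points whenever $p$ is a prime of good reduction (which holds for all but finitely many $p$, and in particular we may assume $p$ is odd and coprime to the conductors). Direct substitution shows $\phi(P)=\phi(2,2)=(1,4)=R$, so by homomorphism $\phi$ restricts to a surjection $\langle P\rangle \twoheadrightarrow \langle R\rangle$ on reductions mod $p$. The key is then to compute the kernel of this restriction and read off the index.

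The kernel of the restriction $\phi|_{\langle P\rangle}$ is $\langle P\rangle \cap \ker(\phi) = \langle P\rangle \cap \{0,Q\}$ in $E(\mathbb{F}_p)$, since Proposition VII.3.1 of Silverman guarantees that the nontrivial $2$-torsion point $Q$ does not reduce to the identity (as $p$ is odd and of good reduction). This intersection is either $\{0\}$ or $\{0,Q\}$, depending on whether $Q$ lies in $\langle P\rangle$. So the two cases will split cleanly by whether $Q\in\langle P\rangle\subseteq E(\mathbb{F}_p)$.

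In the first case, $Q\in \langle P\rangle$, the restriction has kernel of order $2$, so by the first isomorphism theorem $|\langle R\rangle| = |\langle P\rangle|/2$, i.e., the order of $P$ in $E(\mathbb{F}_p)$ is exactly twice the order of $R$ in $E'(\mathbb{F}_p)$. In the second case, $Q\notin\langle P\rangle$, the restriction is injective, hence an isomorphism onto $\langle R\rangle$, so the two orders coincide. To finish, invoke the criterion recorded right after Lemma~\ref{somos5form}: a prime $p$ divides a term in the Somos-5 sequence if and only if the reduction of $Q$ modulo $p$ lies in $\langle P\rangle\subseteq E(\mathbb{F}_p)$, which matches precisely the case distinction above.

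There is essentially no hard step here; the only things to verify carefully are the computation $\phi(P)=R$ (a one-line substitution) and the assertion that $Q$ does not collapse into $\langle P\rangle$ for trivial reasons after reduction, i.e., that $Q$ remains a nonidentity point of order $2$ in $E(\mathbb{F}_p)$, which follows from $p$ being odd and of good reduction. The handful of bad primes may be checked directly or ignored, since Theorem~\ref{main} concerns an asymptotic density.
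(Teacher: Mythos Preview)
Your argument is correct and follows essentially the same route as the paper: both compute $\phi(P)=R$, restrict $\phi$ to $\langle P\rangle$, identify the kernel as $\langle P\rangle\cap\{0,Q\}$, and apply the first isomorphism theorem, with the case split governed by whether $Q\in\langle P\rangle$ in $E(\mathbb{F}_p)$. The only cosmetic difference is that the paper rederives the equivalence ``$p$ divides some $a_m$ $\iff$ $Q\in\langle P\rangle$'' directly from Lemma~\ref{somos5form} inside the proof, whereas you invoke it as an already-established criterion.
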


\begin{proof}
  If $p$ divides a term in our sequence, say $a_m$, we know from our
  previous lemma that the denominators $(m-2)P + Q$ are divisible by
  $p$. Therefore, modulo $p$, $(m-2)P + Q = 0$. The point $Q$ has
  order 2, so adding $Q$ to both sides we know that $(m-2)P =
  Q$.
  Therefore, we can deduce that $Q \in \la P \ra$. We have
  $\text{ker}(\phi) = \{Q, 0 \}$ (see Section 3.4 of
  \cite{SilvermanAndTate}). Therefore, if $\phi$ is restricted to the
  subgroup generated by $P$, we have $|\text{ker}(\phi)| = 2$.  Since
  $\phi(P) = R$, by the first isomorphism theorem for groups,
  $\frac{|\la P \ra|}{|{\rm ker}(\phi)|} = |\la R \ra|$. It follows that
  $|P| = 2 \cdot |R|$.

  Alternatively, assume $p$ does not divide a term in the Somos-5
  sequence. So, there is no $m$ such that $mP + Q = 0$ modulo $p$,
  which implies that $Q \not \in \la P \ra$. Therefore, the kernel
  of $\phi$ restricted to $\la P \ra$ is $\{ 0 \}$ and so
  $|P| = |\phi(P)| = |R|$.
\end{proof}

\section{Galois representations}

Denote by $E[2^r]$ the set of points on $E$ with order dividing
$2^r$. Denote $K_r$ as the field obtained by adjoining to $\Q$ all $x$
and $y$ coordinates of points $\beta$ with $2^{r} \beta = P$. For a
prime $p$ that is unramified in $K_r$, let
$\sigma = \left[ \frac{K_{r}/\Q}{\mathfrak{p}_{i}} \right]$ for some
prime ideal $\mathfrak{p}_{i}$ above $p$. Given a basis
$\langle A,B \rangle$ for $E[2^r]$, for any such
$\sigma \in \Gal(K_{r}/\Q)$, we have
$\sigma(\beta) = \beta + e A + f B$. Also, $\sigma(A) = a A + b B$ and
$\sigma(B) = c A + d B$. Define the map
$\rho_{E, 2^k}: \Gal(K_{r}/ \Q) \rightarrow \AGL_2(\Z / 2^k \Z)$ by
$\rho_{E,2^k}(\sigma) = (\vec v, M)$ where
$M = \bmatrix a & b \\ c & d \endbmatrix$ and
$\vec v = \bmatrix e & f \endbmatrix$. Let
$\tau : \Gal(K_r/\Q) \rightarrow \GL_2(\Z/2^k\Z)$ be given by
$\tau(\sigma) = M$. 

Let $S = \left\{ \beta \in E(\C) : m\cdot \beta \in E(K)\right\}$ and
let $L$ be the field obtained by adjoining all $x$ and $y$ coordinates
of points in $S$ to $K$. Then the only primes $p$ that ramify in $L/K$
are those that divide $m$ and those where $E/K$ has bad reduction (see
Proposition VIII.1.5(b) in \cite{Silverman}). For
$E : y^{2} + xy = x^{3} + x^{2} - 2x$, the conductor of $E$ is
$102 = 2 \cdot 3 \cdot 17$ and so the only primes that ramify in
$K_{r}/\Q$ are $2$, $3$ and $17$.

Note that, if $p$ is unramified, there are multiple primes
$\mathfrak{p}_i$ above $p$ which could result in different matrices
$M_i$ and $\vec v_i$. However, properties we consider of these
$\vec v_i$ and $M_i$ do not depend on the specific choice of
$\mathfrak{p}_i$. The map depends on the choice of basis for $E[2^{r}]$,
we choose this basis as described below in Theorem~\ref{choosebasis}.

Let $\beta_{r} \in E(\C)$ be a point with $2^r\beta_r = P$. We say
that $\beta_r$ is an $r^{\text{th}}$ preimage of $P$ under
multiplication by 2. Let $p$ be a prime with $p \ne 2$, $3$ or $17$,
$\sigma = \artin{K_{r}/\Q}{\mathfrak{p}_{i}}$, and
$(\vec{v},M) = \rho_{E,2^{r}}(\sigma)$. Assume that
$\det(I - M) \not\equiv 0 \pmod{2^r}$.  This implies that
$\# E(\F_{p}) \not\equiv 0 \pmod{2^{r}}$.

\begin{theorem}
Assume the notation above. Then $2^hP$ has odd order in $E(\F_{p})$ if and only if $2^h \vec v$ is in the image of $I-M$.
\end{theorem}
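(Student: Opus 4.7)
My plan is to translate the condition $2^h\vec v \in \mathrm{Im}(I-M)$ into the existence of a $\sigma$-fixed $2^r$-preimage of $2^h P$, and then interpret such a preimage, via reduction modulo $p$, as saying that $2^h\tilde P$ lies in the odd part of $E(\F_p)$; the hypothesis on $\det(I-M)$ is precisely what is needed for this last step.

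For each $(x,y) \in (\Z/2^r\Z)^2$, set $\alpha = 2^h\beta_r + xA + yB$. Then $2^r\alpha = 2^h P$, and every $2^r$-preimage of $2^h P$ arises this way and lies in $E(K_r)$ (since $\beta_r$ and $E[2^r]$ are contained in $K_r$ by the definition of that field). A short computation from the identities $\sigma(\beta_r) = \beta_r + eA + fB$, $\sigma(A) = aA + bB$ and $\sigma(B) = cA + dB$ gives
\[
  \sigma(\alpha) - \alpha = uA + wB, \qquad (u,w) = 2^h\vec v - (x,y)(I-M),
\]
so $\alpha$ is fixed by $\sigma$ if and only if $2^h\vec v = (x,y)(I-M)$. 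Hence $2^h\vec v \in \mathrm{Im}(I-M)$ if and only if some $2^r$-preimage of $2^h P$ in $E(K_r)$ is $\sigma$-fixed.

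I next claim this is equivalent to $2^h\tilde P \in 2^r E(\F_p)$. If $\sigma(\alpha) = \alpha$, then $\tilde\alpha$ is $\F_p$-rational and $2^r\tilde\alpha = 2^h\tilde P$. Conversely, suppose $\tilde\gamma \in E(\F_p)$ satisfies $2^r\tilde\gamma = 2^h\tilde P$. By Proposition VII.3.1 of \cite{Silverman} (using $p \neq 2$), reduction on $E[2^r]$ is injective, and counting gives a bijection between the $4^r$ characteristic-zero $2^r$-preimages of $2^h P$ and the $4^r$ characteristic-$p$ preimages of $2^h\tilde P$. Let $\alpha$ be the unique lift of $\tilde\gamma$. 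Then $\sigma(\alpha) - \alpha \in E[2^r]$ because $2^r\alpha = 2^h P$ is $\Q$-rational, while its reduction equals $\pi_p(\tilde\gamma) - \tilde\gamma = 0$; injectivity of reduction on $E[2^r]$ then forces $\sigma(\alpha) = \alpha$.

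Finally, $\det(I-M) \equiv \#E(\F_p) \pmod{2^r}$, so the hypothesis says the $2$-Sylow $A_0 \subseteq E(\F_p)$ has order strictly less than $2^r$, hence $2^r A_0 = 0$. Writing $E(\F_p) = A_0 \oplus B_0$ with $B_0$ the odd part, we get $2^r E(\F_p) = B_0$. Consequently $2^h\tilde P \in 2^r E(\F_p)$ if and only if $2^h\tilde P$ has odd order, which combined with the previous step proves the theorem. The main technical obstacle is the characteristic-zero-to-characteristic-$p$ transfer in the middle step: it relies both on bijectivity of reduction on $2^r$-torsion and on the fact, guaranteed by the definition of $K_r$, that every $2^r$-preimage of $P$ (and hence of $2^h P$) is already rational over $K_r$.
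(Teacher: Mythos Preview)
Your proof is correct and follows essentially the same approach as the paper: both arguments identify $2^h\vec v \in \mathrm{Im}(I-M)$ with the existence of a $\sigma$-fixed $2^r$-preimage of $2^hP$, and then use the determinant hypothesis (equivalently $\#E(\F_p)\not\equiv 0\pmod{2^r}$) together with injectivity of reduction on prime-to-$p$ torsion to pass between characteristic zero and characteristic $p$. Your presentation is somewhat cleaner in that you factor the argument through the intermediate statement $2^h\tilde P\in 2^rE(\F_p)$ and identify $2^rE(\F_p)$ with the odd part, whereas the paper argues each direction more directly (in particular constructing the explicit preimage $a\cdot 2^hP$ with $a\equiv (2^r)^{-1}$ in the converse), but the substance is the same.
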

\begin{proof}
  First, assume $2^h \vec v$ is in the image of $I-M$. This means that
  $\vec x = 2^h\vec v + \vec xM$ for
  some row vector $\vec x$ with coordinates in
  $(\mathbb{Z} / 2^r \mathbb{Z})^2$. If this is true for
  $\vec x = \begin{bmatrix} e & f \end{bmatrix}$, define $C := 2^{h} \beta_{r} + eA + fB$. We know then that $\sigma(C) = C$. We have $2^{r} C
= 2^{h} (2^{r} \beta_{r}) = 2^{h} P$. If $|C|$ is odd, then clearly $|2^rC| = |2^hP|$ is also odd. 

If $|C|$ is even, then every multiplication of $C$ by 2 cuts the order by a factor of 2 until we arrive at a point of odd order. Since $|E(\F_{p})|
\equiv \det(I-M) \not\equiv 0 \pmod{2^{r}}$, the power of 2 dividing $|C|$ is also less than $r$, and so $|2^rC| = |2^hP|$ is odd.

Conversely, assume that $|2^hP|$ is odd. Let $a$ be the multiplicative
inverse of $2^{r}$ modulo $|2^{h} P|$ and define $C := a2^{h} P \in E(\F_{p})$.
Then $2^{r} S = 2^{h} P$ and so $2^{r} (C - 2^{h} \beta_{r}) = 0$. It follows
that $C = 2^{h} \beta_{r} + yA + zB \in E(\F_{p})$ for some $y, z \in \Z/2^{r} \Z$. This implies that there is a Frobenius automorphism $\sigma \in \Gal(K_{r}/\Q)$ for which $\sigma(C) \equiv C \pmod{\mathfrak{p}_{i}}$ for some prime
ideal $\mathfrak{p}_{i}$ above $p$.

We claim that $\sigma(C) = C$ (as elements of $E(K_{r})$). Note that
$\sigma(C) - C \in E[2^{r}]$ and $\sigma(C) - C$ reduces to the
identity modulo $\mathfrak{p}_{i}$. Since reduction is injective on
torsion points of order coprime to the characteristic, and $p$ is odd,
it follows that $\sigma(C) = C$. It follows that if $\rho_{E,2^{r}}(\sigma)
= (\vec{v},M)$ then $2^{h} \vec{v} = (I-M) \begin{bmatrix} y & z \end{bmatrix}$,
which implies that $2^{h} \vec{v}$ is in the image of $I-M$.
\end{proof}

The following corollary is immediate. 
\begin{corollary}
Let $o$ be the smallest positive integer so that $2^o\vec v = (I-M)$$\vec x$ for some $\vec x$ with entries in $(\mathbb{Z} / 2^r \mathbb{Z})^2$. Then $2^o$ is the highest power of 2 dividing $|P|$.
\end{corollary}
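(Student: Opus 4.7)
The plan is to invoke the preceding theorem for each $h \geq 0$ and read off the $2$-adic valuation of $|P|$ directly. Let $a = v_{2}(|P|)$ denote the $2$-adic valuation of the order of $P$ in $E(\F_{p})$, so that $|P| = 2^{a} m$ with $m$ odd. A one-line computation gives $|2^{h} P| = |P|/\gcd(|P|, 2^{h}) = 2^{\max(a-h,0)} m$, which is odd precisely when $h \geq a$. Combining this with the theorem yields the key equivalence: $2^{h} \vec v$ lies in the image of $I - M$ if and only if $h \geq a$.

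To extract the corollary, I would note that the set of such $h$ is upward-closed---if $(I-M)\vec x = 2^{h} \vec v$ then $(I-M)(2 \vec x) = 2^{h+1} \vec v$---so its minimum element is $a$. Therefore $o = a$, and $2^{o}$ is the exact power of $2$ dividing $|P|$, as claimed. There is essentially no substantive obstacle here; the corollary is a direct reformulation of the theorem bit by bit. The only delicate point worth noting is the convention on ``smallest positive integer'': strictly one should allow $o = 0$ (when $P$ already has odd order in $E(\F_{p})$), with $2^{o} = 1$ then representing the trivial highest power of $2$ dividing $|P|$.
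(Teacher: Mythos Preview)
Your argument is correct and is exactly the natural unpacking of the corollary: the paper gives no proof at all, simply declaring ``The following corollary is immediate,'' and your computation $|2^{h}P| = 2^{\max(a-h,0)} m$ together with the upward-closure observation is precisely what makes it immediate. Your remark that ``smallest positive integer'' should really be ``smallest nonnegative integer'' to cover the case where $P$ already has odd order is also well taken.
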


The following theorem gives a convenient choice of basis for
$E[2^{k}]$ and $E'[2^{k}]$.
\begin{theorem}
\label{choosebasis}
Given a positive integer $k$, there are points $A_k, B_k \in E(\mathbb{C})$ that generate $E[2^k]$ and points $C_k, D_k \in E'(\mathbb{C})$ that generate $E'[2^k]$ so that $\phi(A_k) = C_k$ and $\phi(B_k) = 2D_k$. These points also satisfy the relations:
\[
2A_k = A_{k-1}, \quad
2B_k = B_{k-1}, \quad
2C_k = C_{k-1}, \text{ and} \quad
2D_k = D_{k-1}.
\]
\end{theorem}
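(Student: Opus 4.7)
The plan is to leverage the complex analytic uniformization of $E$ and $E'$ so that all levels $k$ can be built simultaneously from a single choice of lattice bases. First, I would fix isomorphisms $E(\C) \cong \C/\Lambda$ and $E'(\C) \cong \C/\Lambda'$ such that the degree-$2$ isogeny $\phi$ is identified with the natural quotient $\C/\Lambda \twoheadrightarrow \C/\Lambda'$ coming from an inclusion $\Lambda \subset \Lambda'$ of index $2$. Since $\ker(\phi) = \{0, Q\}$ with $Q \in E[2]$, the cyclic quotient $\Lambda'/\Lambda$ is generated by $\omega_1/2 + \Lambda$ for some primitive $\omega_1 \in \Lambda$ whose class represents $Q$. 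I would extend $\omega_1$ to a $\Z$-basis $\{\omega_1, \omega_2\}$ of $\Lambda$; then $\{\omega_1/2, \omega_2\}$ is automatically a $\Z$-basis of $\Lambda'$.

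Next I would declare
\[
A_k = \omega_2/2^k + \Lambda, \quad B_k = \omega_1/2^k + \Lambda, \quad C_k = \omega_2/2^k + \Lambda', \quad D_k = \omega_1/2^{k+1} + \Lambda'.
\]
Because $\{\omega_1, \omega_2\}$ is a basis of $\Lambda$, the pair $\{A_k, B_k\}$ generates $E[2^k] = \tfrac{1}{2^k}\Lambda/\Lambda$; similarly $\{C_k, D_k\}$ generates $E'[2^k]$, since $\omega_1/2^{k+1} = (\omega_1/2)/2^k$ is the $2^k$-division point in the $\omega_1/2$ direction of the chosen $\Lambda'$-basis. The doubling relations $2A_k = A_{k-1}$, $2B_k = B_{k-1}$, $2C_k = C_{k-1}$, and $2D_k = \omega_1/2^k + \Lambda' = D_{k-1}$ are one-line checks.

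Finally, identifying $\phi$ with the map $z + \Lambda \mapsto z + \Lambda'$ gives
\[
\phi(A_k) = \omega_2/2^k + \Lambda' = C_k, \qquad \phi(B_k) = \omega_1/2^k + \Lambda' = 2\bigl(\omega_1/2^{k+1} + \Lambda'\bigr) = 2D_k,
\]
which are the two identities the theorem demands. The only delicate step is the first one: justifying that a $2$-isogeny of complex elliptic curves with the prescribed kernel really is the lattice-quotient map described. This is classical (any isogeny of complex tori lifts to a $\C$-linear map on the universal cover, and the degree together with the kernel force the stated lattice extension up to isomorphism), but it is the structural input that makes the rest pure bookkeeping. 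The crucial choice inside that bookkeeping is the assignment of $\omega_1$ to the ``kernel direction'' of $\phi$: that assignment is exactly what forces $\phi(B_k) = 2D_k$ rather than $D_k$, producing the asymmetry in the statement.
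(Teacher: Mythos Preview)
Your proof is correct and takes a genuinely different route from the paper's. The paper proceeds by induction on $k$: it sets up the base case $k=1$ by hand, and for the inductive step it uses the dual isogeny $\phi'$ (with $\phi\circ\phi'=[2]$) to lift points one level at a time, then spends most of its effort verifying that the newly chosen $A_{k+1},B_{k+1}$ and $C_{k+1},D_{k+1}$ really are bases of $E[2^{k+1}]$ and $E'[2^{k+1}]$ by chasing intersections of cyclic subgroups. Your approach instead fixes the analytic uniformizations once, aligns $\omega_1$ with the kernel direction of $\phi$, and writes down the points for all $k$ simultaneously; the basis and doubling checks then become trivial lattice identities. What your argument buys is brevity and transparency---the asymmetry $\phi(B_k)=2D_k$ is visibly forced by the index-$2$ inclusion $\Lambda\subset\Lambda'$ in the $\omega_1$-direction. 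What the paper's argument buys is that it stays entirely in the algebraic category (no appeal to the uniformization theorem), and along the way it records the auxiliary relation $\phi'(D_k)=B_k$, which is used later when translating between $\rho_{E,2^k}$ and $\rho_{E',2^{k-1}}$; your lattice picture gives this too (since $\phi'$ is $z+\Lambda'\mapsto 2z+\Lambda$), though you do not mention it.
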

\begin{proof}
We will prove this by induction. Recall that $\phi : E \rightarrow E'$ is the 
isogeny with ker $\phi = \{0,T\}$ where $T = (0,0)$. Let $\phi'
: E' \to E$ be the dual isogeny, and note that $\phi \circ \phi'(P) = 2P$.
\textit {Base Case}: Let $ k = 1$. We want to find $\langle A_1, B_1 \rangle$ 
to generate $E[2]$ and $\langle C_1, D_1 \rangle$ to generate $E'[2]$ so that 
$\phi(A_1) = C_1$ and $\phi(B_1) = 2D_1$. We set $B_{1} = (0,0)$, and choose 
$A_{1}$ to be any non-identity point in $E[2]$ other than $(0,0)$. We set 
$C_{1} = \phi(A_{1}) = (-5/4,5/8)$ and choose $D_{1}$ to be any non-identity 
point in $E'[2]$ other than $C_{1}$. Note that $\phi'(D_{1}) = B_{1}$. 

\textit {Inductive Hypothesis}: Assume $\langle A_k, B_k \rangle = E[2^k]$ and $ \langle C_k, D_k \rangle = E'[2^k]$ so that $\phi(A_k) = C_k$, $\phi(B_k) = 2D_k$, and $\phi'(D_{k}) = B_{k}$. Moreover, $D_{k} \not\in \phi(E[2^{k}])$.

Since $|\ker \phi| = 2$, we have that $\phi(E[2^{k+1}]) \supseteq E'[2^{k}]$.
Hence, we can choose $B_{k+1}$ so that $\phi(B_{k+1}) = D_{k}$.
Then $2B_{k+1} = \phi'(\phi(B_{k+1})) = \phi'(D_{k}) = B_{k}$. We choose
$D_{k+1}$ so that $\phi'(D_{k+1}) = B_{k+1}$. Note that
$2D_{k+1} = \phi(B_{k+1}) = D_{k}$ and so $D_{k+1} \in E'[2^{k+1}]$.
Now we pick $A_{k+1}$ so that $2A_{k+1} = A_{k}$ and define $C_{k+1} = \phi(A_{k+1})$. 

By our Inductive Hypothesis, $\langle A_k, B_k \rangle = E[2^k]$. This implies that $\langle A_k \rangle \cap \langle B_k \rangle = 0$, which in turn implies that $\langle 2A_{k+1}\rangle \cap \langle 2B_{k+1} \rangle = 0$. Let $C \in \langle A_{k+1} \rangle \cap \langle B_{k+1} \rangle$. Then, $C = aA_{k+1} = bB_{k+1}$. Because $|g^m| = \frac{|g|}{\gcd(m,|g|)}$, $|c| = \frac{2^{k+1}}{2^{\ord_2(a)}} = \frac{2^{k+1}}{2^{\ord_2(b)}}$, where $ \ord_2(n) $ is the highest power of $2$ dividing $n$, it follows that either $a$ and $b$ are both even, or they are both odd.
If $a$ and $b$ are even, then $C \in \langle A_k \rangle \cap \langle B_k \rangle = 0$, which is a contradiction. If $a$ and $b$ are odd, then
$|C| = 2^{k+1}$ but $2C \in \langle A_k \rangle \cap \langle B_k \rangle = 0$, which is also a contradiction. It follows
that $\langle A_{k+1} \rangle \cap \langle B_{k+1} \rangle = 0$,
which gives that $E[2^{k+1}] = \langle A_{k+1}, B_{k+1} \rangle$.

Now we show that $\langle C_{k+1}, D_{k+1}\rangle = E'[2^{k+1}]$, by
way of showing that $\langle C_{k+1}\rangle \cap \langle D_{k+1}\rangle = 0$. We 
have shown that $\langle A_{k+1}, B_{k+1} \rangle = E[2^{k+1}]$, and so
$\phi(E[2^{k+1}]) = \langle C_{k+1}, 2D_{k+1} \rangle$.  We want to
show that $D_{k+1} \notin \phi(E[2^{k+1}])$.

If $D_{k+1} \in \phi(E[2^{k+1}])$, then $D_{k+1} = aC_{k+1} + 2bD_{k+1}$. So, $aC_{k+1} + (2b - 1)D_{k+1} = 0$. Since $(2b - 1)$ is odd, 
$(2b - 1)D_{k+1}$ has order dividing $2^{k+1}$. Hence, $aC_{k+1}$ has order dividing $2^{k+1}$. We can then see that 
\begin{align*}
2aC_{k+1} + 2(2b - 1)D_{k+1} &= 0\\
aC_k + (2b - 1) D_k &= 0 \\
\implies a \equiv (2b - 1) &\equiv 0 \pmod{2^k},
\end{align*}
which is a contradiction. This implies that $\phi(E[2^{k+1}])$ is an index $2$ subgroup of $\langle C_{k+1}, D_{k+1} \rangle$ of order $2^{2k+1}$, and so $\langle C_{k+1}, D_{k+1} \rangle 
= E'[2^{k+1}]$. This proves the desired claim.
\end{proof}

Recall the maps $\rho_{E,2^k} : \Gal(K_k/\mathbb{Q}) \rightarrow
\AGL_2(\mathbb{Z}/2^k\mathbb{Z})$ and
$\tau : \Gal(K_k/\mathbb{Q}) \rightarrow
\GL_2(\mathbb{Z}/2^k\mathbb{Z})$,
defined at the beginning of this section. In \cite{RouseDZB},
an algorithm is given to compute the image of the $2$-adic Galois 
representation $\tau$. Running this algorithm shows that the image of $\tau$ (up
to conjugacy) is 
the index 6 subgroup of $\GL_2(\Bbb{Z}/2^k \Bbb{Z})$ generated by $\bmatrix 1 & 1 \\ 0 & 1 \endbmatrix$,  $\bmatrix 7 & 0 \\ 2 & 1 \endbmatrix$, and $\bmatrix 5 & 0 \\ 2 & 1 \endbmatrix$. Moreover, the subgroup generated by the
aforementioned matrices is the unique conjugate that corresponds to
the basis chosen in Theorem~\ref{choosebasis}.

\begin{theorem} 
\label{sqrt2}
If $\rho_{E,2^{k}}(\sigma) = (\vec{v}, M)$ where $\vec{v} = (e,f)$,
then $e \equiv 0 \pmod{2}$ if and only if $M \equiv 1, 7 \pmod{8}$.
\end{theorem}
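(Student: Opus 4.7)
The plan is to identify the map $\sigma \mapsto (e_\sigma \bmod 2)$ as the quadratic character of $\Gal(K_k/\Q)$ cut out by $\Q(\sqrt{2})$, and then convert the condition ``$\sigma$ fixes $\sqrt{2}$'' into the stated condition on $M$ via the cyclotomic character. First, $e \bmod 2$ really is a group homomorphism: the $\AGL_2$ group law gives $e_{\sigma\tau} = e_\sigma + a_\sigma e_\tau + c_\sigma f_\tau$, and since the generators of the image of $\tau$ described just above all have $a \equiv 1$ and $c \equiv 0 \pmod 2$, this collapses modulo $2$ to $e_{\sigma\tau} \equiv e_\sigma + e_\tau$. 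On the other side, $\det \tau(\sigma_p) \equiv p \pmod{2^k}$, and $\{1,7\}$ is the kernel of the Kronecker character $(2/\cdot)$ on $(\Z/8\Z)^\times$, so ``$\det M \in \{1,7\} \pmod 8$'' is likewise a quadratic character of $\Gal(K_k/\Q)$.

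To match the two characters, I would first exhibit $\sqrt{2} \in K_1$. Applying the doubling formula on $E$ at a variable point $\beta_1 = (X,Y)$ and imposing $x([2]\beta_1) = 2$ yields an irreducible quartic in $X$ which factors over $\Q(\sqrt{2})$ as
\[
  \bigl(X^2 + (-4+3\sqrt{2})X - 2\bigr)\bigl(X^2 + (-4-3\sqrt{2})X - 2\bigr),
\]
so $\sqrt{2} \in \Q(\beta_1) \subseteq K_1$. The task now becomes showing that the character $e \bmod 2$ cuts out this specific quadratic subfield, rather than $\Q(\sqrt{17}) = \Q(E[2])$ or $\Q(\sqrt{34})$.

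The bridge is provided by the isogeny $\phi$. Since $\ker \phi = \la B_1 \ra$, its $x$-coordinate function $u(x) = (x^2 - 2)/x$ is invariant under $\beta \mapsto \beta + B_1$, and a brief computation shows that $u(X) = u(X')$ forces either $X = X'$ or $XX' = -2$. Both quadratic factors above have constant term $-2$, so their root-pairs are precisely the $B_1$-translation pairs $\{X(\beta), X(\beta + B_1)\}$. Indexing the four roots of the quartic as $X_{(e,f)} := X(\beta_1 + eA_1 + fB_1)$, the partition becomes $\{X_{(0,0)}, X_{(0,1)}\}$ versus $\{X_{(1,0)}, X_{(1,1)}\}$, and swapping the two factors corresponds to the involution $\sqrt{2} \mapsto -\sqrt{2}$.

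Finally, a direct calculation gives $\sigma(X_{(0,0)}) = X_{(e,f)}$ and $\sigma(X_{(0,1)}) = X_{(e+c,f+d)}$, so using once more that every element of the image of $\tau$ has $c \equiv 0 \pmod 2$, $\sigma$ always preserves the partition and fixes each half iff $e \equiv 0 \pmod 2$. Combining with $\sigma = \sigma_p$ and the chain of equivalences $(2/p) = 1 \iff p \equiv \pm 1 \pmod 8 \iff \det M \equiv 1, 7 \pmod 8$ completes the argument. The main obstacle is the third paragraph: without the explicit factorization of the $2$-division polynomial over $\Q(\sqrt{2})$ and the interpretation of the factor-pairs as $\ker\phi$-orbits, there is no clean way to pick out $\Q(\sqrt{2})$ from the three quadratic subfields of $K_1$ as the fixed field of $e \bmod 2$.
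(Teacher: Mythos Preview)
Your proof is correct and shares the paper's overall architecture: both conditions are shown to be equivalent to $\sigma(\sqrt{2}) = \sqrt{2}$, with the isogeny $\phi$ handling the ``$e$ even'' side and the cyclotomic/determinant character handling the ``$\det M \bmod 8$'' side.

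The difference is in how the first equivalence is executed. The paper applies $\phi$ directly to the relation $\sigma(\beta_1) = \beta_1 + eA_1 + fB_1$; since $B_1 \in \ker\phi$, this becomes $\sigma(\phi(\beta_1)) = \phi(\beta_1) + e\,\phi(A_1)$, so $e$ is even iff $\sigma$ fixes the point $\phi(\beta_1)$, whose coordinates a quick computation shows generate $\Q(\sqrt{2})$. You instead work on the level of $x$-coordinates: factor the quartic for $x(\beta_1)$ over $\Q(\sqrt{2})$, identify the two quadratic factors as the two $B_1$-translation orbits via the product-of-roots $= -2$ observation (equivalently, via the $\phi$-invariant $u(x) = (x^2-2)/x$), and then read off that swapping the factors corresponds simultaneously to $\sqrt{2}\mapsto -\sqrt{2}$ and to $e$ odd. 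These are the same idea---quotient by $\ker\phi$ to land in a situation governed by $\Q(\sqrt{2})$---but the paper's version is shorter and avoids the explicit factorization; your version has the virtue of making the appearance of $\sqrt{2}$ completely explicit in the division polynomial. Your preliminary remark that $e\bmod 2$ is a homomorphism (using $a\equiv 1$, $c\equiv 0 \pmod 2$ in the image of $\tau$) is a nice sanity check but is not actually needed once you track the Galois action on the root-partition directly.
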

\begin{proof}
We will show that $e \equiv 0 \pmod{2}$ and $M \equiv 1, 7 \pmod{8}$
if and only if $\sigma(\sqrt{2}) = \sqrt{2}$.

Let $\beta_{1}$ be a point in $E(K_{1})$ so that
$2 \beta_{1} = (2,2)$. We pick a basis $\langle A_{1}, B_{1} \rangle$
according to Theorem~\ref{choosebasis}. We have
$\sigma(\beta_{1}) = \beta_{1} + e A_{1} + f B_{1}$, where
$e, f \in \Z/2^{k} \Z$.

Let $\phi : E \to E'$ be the usual isogeny and note that
$B_{1} \in \ker \phi$.  Thus,
$\phi(\sigma(\beta_{1})) = \phi(\beta_{1} + e A_{1} + f B_{1}) =
\phi(\beta_{1}) + e \phi(A_{1})$.
It follows that $e \equiv 0 \pmod{2}$ if and only if
$\sigma(\phi(\beta_{1})) = \phi(\sigma(\beta_{1})) =
\phi(\beta_{1})$.
A straightforward computation shows that the coordinates of
$\phi(\beta_{1})$ generate $\Q(\sqrt{2})$.  It follows that
$e \equiv 0 \pmod{2}$ if and only if $\sigma(\sqrt{2}) = \sqrt{2}$.

Finally, suppose that $\sigma$ is the Artin symbol associated to a prime ideal
$\mathfrak{p}$ above a rational prime $p$. By properties
of the Weil pairing (see \cite{Silverman}, Section III.8),
we have that $\zeta_{2^{k}} = e^{2 \pi i / 2^{k}} \in \Q(E[2^{k}])$, and that
$\sigma(\zeta_{2^{k}}) = \zeta_{2^{k}}^{\det(M)} = \zeta_{2^{k}}^{p}$. Since
$\sqrt{2} = \zeta_{8} + \zeta_{8}^{-1}$, it follows
easily that $\sigma(\sqrt{2}) = \sqrt{2} \iff p \equiv 1, 7 \pmod{8}$
and hence $\sigma(\sqrt{2}) = \sqrt{2}$ if and only if $\det(M)
\equiv 1, 7 \pmod{8}$.
\end{proof}

For $k \geq 3$, define $I_k$ to be the subgroup of $\AGL_{2}(\Z/2^{k} \Z)$
whose elements are ordered pairs $\{(\vec v, M)\}$ where $\vec{v} = \bmatrix e & f \endbmatrix$, the reduction of $M \bmod 8$ is in the group generated by $\bmatrix 1 & 1 \\ 0 & 1 \endbmatrix$,  $\bmatrix 7 & 0 \\ 2 & 1 \endbmatrix$, and $\bmatrix 5 & 0 \\ 2 & 1 \endbmatrix$, and $e \equiv 0 \pmod{2}$ if and only if $\det(M) \equiv 1$ or $7 \pmod{8}$. By Theorem~\ref{sqrt2} and the
discussion preceeding it, we know that the image of $\rho : \Gal(K_{k}/\Q)
\to \AGL_{2}(\Z/2^{k} \Z)$ is contained in $I_{k}$.

We now aim to show that the map $\rho_{E,2^k} : \Gal(K_k/\Q) \rightarrow I_k$ is surjective for $k \geq 3$. By \cite{Silverman} (page 105), if we have an elliptic curve $E : y^2 = x^3 + Ax + B$, the division polynomial $\psi_m \in \Z[A, B, x, y]$ is determined recursively by:
\begin{align*}
\psi_1 &= 1, \psi_2 = 2y, \psi_3 = 3x^4 + 6Ax^2 + 12Bx - A^2, \\
\psi_4 &= 4y(x^6 + 5Ax^4 + 20Bx^3 - 5A^2x^2 - 4ABx - 8B^2 - A^3), \\
\psi_{2m+1} &= \psi_{m+2}\psi_m^3 - \psi_{m-1}\psi_{m+1}^3,\quad
2y\psi_{2m} = \psi_m(\psi_{m+1}\psi_{m-1}^2 - \psi_{m-2}\psi_{m+1}^2 ).
\end{align*}
We then define $\phi_m$ and $\omega_m$ as follows:
\begin{align*}
\phi_m &= x\psi_m^2 - \psi_{m+1}\psi_{m-1} \\
4y\omega_m &= \psi_{m+2}\psi_{m-1}^2 - \psi_{m-2}\psi_{m+1}^2.
\end{align*}
If $\Delta = -16(4A^3 + 27B^2) \neq 0$, then $\phi_m(x)$ and $\psi_m(x)^2$ are relatively prime. This also implies that, for $P = (x_0, y_0) \in E$, 
\begin{align*}
[m]P = \left( \frac{\phi_m(P)}{\psi_m(P)^2}, \frac{\omega_m(P)}{\psi_m(P)^3} \right). 
\end{align*}

\begin{lemma}
\label{i3lem}
The map $\rho_{E, 8} : \Gal(K_3, \mathbb{Q}) \rightarrow I_3$ is surjective.
\end{lemma}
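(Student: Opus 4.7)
The strategy is to show the image of $\rho_{E,8}$ has the same cardinality as $I_3$, so I first compute $|I_3|$. The matrix part of $I_3$ is the stated index-$6$ subgroup of $\GL_2(\Z/8\Z)$, of order $1536/6 = 256$. For each such matrix $M$, the parity of the first entry $e$ of $\vec v$ is forced by $\det(M) \bmod 8$, giving $4 \cdot 8 = 32$ admissible vectors per matrix. Hence $|I_3| = 256 \cdot 32 = 8192$, and it suffices to produce that many elements in the image.

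Next I would decompose the image via the exact sequence
\[
  1 \to \Gal(K_3/\Q(E[8])) \to \Gal(K_3/\Q) \to \Gal(\Q(E[8])/\Q) \to 1.
\]
The right-hand quotient is the image of $\tau \bmod 8$, which by the algorithm from \cite{RouseDZB} already fills the $256$-element matrix part of $I_3$. Any $\sigma$ fixing $\Q(E[8])$ sends $\beta_3$ to $\beta_3 + \vec v$ with $\vec v \in E[8]$, giving an injection $\Gal(K_3/\Q(E[8])) \hookrightarrow E[8] \cong (\Z/8\Z)^2$; this injection is well-defined because any two preimages of $P$ under multiplication by $8$ differ by an element of $E[8] \subseteq \Q(E[8])$, so $K_3 = \Q(E[8])(\beta_3)$. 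Applying Theorem~\ref{sqrt2} with $M = I$ (so $\det M = 1 \in \{1,7\}$), the image is contained in the ``$e$ even'' subgroup of order $32$. Thus the image of $\rho_{E,8}$ is a subgroup of $I_3$ of order at most $8192$, with equality if and only if $[K_3 : \Q(E[8])] = 32$.

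To establish this last degree equality I would turn to Magma. Concretely, put $E$ in short Weierstrass form and use the division polynomials $\phi_8, \psi_8$ from the preceding paragraph: the $x$-coordinates of $8$-preimages of $P$ are the roots of $\phi_8(X) - x_P\,\psi_8(X)^2$, and one builds $K_3$ as the splitting field of this polynomial adjoined to $\Q(E[8])$, itself the splitting field of $\psi_8$ together with the curve equation, then reads off the degree. A lighter, more conceptual alternative is Frobenius-driven: for a handful of small primes $p \notin \{2,3,17\}$, compute $\rho_{E,8}(\sigma_p)$ from the order of $P$ and from $\# E(\F_p)$ (which pins down $\det M$ and $\mathrm{tr}\,M \bmod 8$), together with discrete-log data on preimages of $P$ in $E(\F_{p^n})$ for appropriate $n$, and verify that the resulting elements generate a subgroup of order $8192$ in $I_3$.

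The principal obstacle is the cost of working in $K_3$ or even $\Q(E[8])$ directly, since $\Q(E[8])$ already has degree $256$ over $\Q$. The Frobenius route sidesteps this by trading field arithmetic for group arithmetic in $E(\F_{p^n})$; the only remaining difficulty is to locate enough primes whose Frobenius images cover a pre-chosen generating set of $I_3$, which can be done systematically by enumeration and is in any case inside the regime where Magma can close the argument by direct computation.
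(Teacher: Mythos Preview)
Your primary route is essentially the paper's: both compute $|I_3|=8192$ and then verify $|\Gal(K_3/\Q)|=8192$ by a Magma computation with the division-polynomial equation $\phi_8(x)-x_P\,\psi_8(x)^2=0$. The paper is leaner in execution, however: it simply asks Magma for the Galois group of this degree-$64$ polynomial over $\Q$ and reads off the order $8192$, so the exact-sequence decomposition and the passage through $\Q(E[8])$ are unnecessary. In particular the obstacle you flag---having to work over a degree-$256$ base field---never arises, because the splitting field of $f$ over $\Q$ already equals $K_3$ (once all $x$-coordinates of the $8$-preimages are adjoined, the $y$-coordinates and hence all of $E[8]$ come along for free).

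Your Frobenius alternative is a genuinely different idea, but as written it is sketchier than it sounds. Knowing $\det M$ and $\operatorname{tr} M$ modulo $8$ and the $2$-adic order of $P$ in $E(\F_p)$ does not pin down $(\vec v,M)$ as an element of $I_3$, only (at best) its conjugacy class, and since the goal is to show the image is all of $I_3$ you cannot assume $I_3$-conjugacy and $H$-conjugacy coincide before the fact. Turning this into a proof would require either identifying invariants that separate enough conjugacy classes to force $|H|=8192$, or computing $\rho_{E,8}(\sigma_p)$ on the nose by working in $E(\F_{p^n})$ with a chosen lift of the basis $A_3,B_3,\beta_3$---at which point the direct Galois-group computation is simpler.
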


\begin{proof}
The curve $E$ is isomorphic to $E_2 : y^2 = x^3 - 3267x +45630$. The isomorphism that takes $E$ to $E_2$ takes $P = (2,2) \text{ on } E$ to $P_2 = (87, 648) \text{ on } E_2$.

We use division polynomials to construct a polynomial $f(x)$ whose
roots are the $x$-coordinates of points $\beta_3$ on $E_2$ so that
$8\beta_3 = P_2$. By the above formulas,
$8P_2 = \left(\frac{\phi_8(P_2)}{\psi_8(P_2)^2},
  \frac{\omega_8(P_2)}{\psi_8(P_2)^3} \right) $.
Since $P_2 = (87, 648)$,
$$
f(x) = \phi_8(P_2) - 87 \psi_8(P_2)^2 = 0
$$
will yield the equation with roots that satisfy our requirement. This is a degree $64$ polynomial. By using Magma to compute the Galois Group of $f(x)$, we find the order to be $8192$. A simple calculation shows that $I_{3}$
has order $8192$ and since $f(x)$ splits in $K_{3}/\Q$, we have 
that $\Gal(K_{3}/\Q) \cong I_{3}$.
\end{proof}

To prove the surjectivity of $\rho_{E,2^{k}}$, we will consider the Frattini
subgroup of $I_{k}$. This is the intersection of all maximal subgroups
of $I_{k}$. Since $I_{k}$ is a $2$-group, every maximal subgroup is normal
and has index $2$. It follows from this that if $g \in I_{k}$,
then $g^{2} \in \Phi(I_{k})$. 

%
%
%

\begin{lemma}
\label{fratlem}
For $3 \leq k$, $\Phi(I_k)$ contains all pairs $(\vec{v}, M)$ such that $\vec{v} \equiv \vec{0} \pmod{4}$ and  $M \equiv I \pmod{8}$. 
\end{lemma}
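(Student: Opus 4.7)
Since $I_k$ is a $2$-group, its Frattini subgroup equals $I_k^2 [I_k, I_k]$, so it suffices to realize every element of the subgroup $H := \{(\vec{v}, M) : \vec{v} \equiv \vec{0} \pmod{4},\ M \equiv I \pmod{8}\}$ as a product of squares and commutators of elements of $I_k$. My first step is to verify that $H$ is a subgroup of $I_k$: under the group law $(\vec{v}_1, M_1)*(\vec{v}_2, M_2) = (\vec{v}_1 + \vec{v}_2 M_1, M_2 M_1)$, the condition $M_1 \equiv I \pmod{8}$ forces $\vec{v}_2 M_1 \equiv \vec{v}_2 \pmod{4}$, so both defining congruences are preserved.

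A natural generating set for $H$ consists of the translations $(4\vec{e}_1, I), (4\vec{e}_2, I)$ together with matrix elements $(\vec{0}, I + 8 E_{ab})$ for $(a,b) \in \{1,2\}^2$. The translation generators are immediate squares: $(2\vec{e}_i, I)$ lies in $I_k$ (the parity condition is satisfied since $\det I \equiv 1 \pmod 8$ and $2$ is even), and $(2\vec{e}_i, I)*(2\vec{e}_i, I) = (4\vec{e}_i, I)$. For the matrix generators I would use the fact, noted after Lemma~\ref{i3lem}, that the image $G_k$ of $\tau$ in $\GL_2(\Z/2^k\Z)$ is the full preimage of a fixed index-$6$ subgroup of $\GL_2(\Z/8\Z)$; hence $G_k$ contains the principal congruence subgroup $\Gamma(8) := \{M : M \equiv I \pmod{8}\}$, and $(\vec{0}, M) \in I_k$ for every such $M$. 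The off-diagonal generators then follow from explicit squares in $G_k$: $(g_1^4)^2 = g_1^8 = I + 8 E_{12}$ exactly, and $(g_3^2)^2 \equiv I + 8 E_{21} \pmod{16}$ since $g_3^2 \equiv I + 4 E_{21} \pmod{8}$ and $E_{21}^2 = 0$.

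The diagonal generators $(\vec{0}, I + 8 E_{ii})$ require more care, because no element of $G_3$ has the shape $I + 4 E_{ii}$. My plan is to square $(\vec{v}, g_2 g_3^{-1}) \in I_k$, where $\vec{v}$ is chosen with odd first coordinate to satisfy the parity condition (since a short computation gives $\det(g_2 g_3^{-1}) \equiv 3 \pmod 8$): one finds $g_2 g_3^{-1} \equiv \mathrm{diag}(3, 1) \pmod 8$ and hence $(g_2 g_3^{-1})^2 \equiv I + 8 E_{11} \pmod{16}$, while the translation part of the square lies in $4 (\Z/2^k \Z)^2$ and is absorbed using powers of the translation squares already exhibited. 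An analogous construction producing an element congruent to $\mathrm{diag}(1,3) \pmod 8$ (built as a suitable conjugate or product of $g_1, g_2, g_3$) yields $I + 8 E_{22}$. The main obstacle is passing from these congruences modulo $16$ to exact equalities modulo $2^k$ for all $k \geq 3$; I would handle this by induction on $k$, since any residual discrepancy of the form $I + 2^{k-1} N$ is the square $(I + 2^{k-2} N)^2 \pmod{2^k}$ with $I + 2^{k-2} N \in \Gamma(8) \subseteq G_k$ for $k \geq 5$, and the base cases $k = 3, 4$ are small enough to verify directly (or in Magma).
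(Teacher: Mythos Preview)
Your strategy is essentially the paper's: exhibit the translation generators $(4\vec e_i,I)$ as squares, then show every $(\vec 0,M)$ with $M\equiv I\pmod 8$ lies in $\Phi(I_k)$ via an iterative ``square-and-correct'' argument that is exactly the paper's backwards induction $(I+2^{r-2}N)^2\equiv I+2^{r-1}N\pmod{2^{2r-4}}$. The paper does not attempt your explicit generator-by-generator treatment of $I+8E_{ab}$; instead it runs the induction down to $r=4$ and then verifies the single remaining step ($r=4\to r=3$) by a Magma computation of $\Phi(I_4)$. Your fallback ``base cases $k=3,4$ \dots in Magma'' is literally that computation, so once you invoke it the two proofs coincide.

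There is, however, a genuine gap in your attempt to avoid the computation. For the $E_{22}$ generator you assert the existence of an element of $G_k$ congruent to $\mathrm{diag}(1,3)\pmod 8$ without exhibiting one; such an element does exist (for instance $(g_1^{-1}g_2 g_1)\,g_3^{-1}\,g_1^{4}\,g_3^{2}\equiv\mathrm{diag}(1,3)\pmod 8$), but it is not as immediate as you suggest. More seriously, your absorption step fails here even once the element is found: since $\det(\mathrm{diag}(1,3))\equiv 3\pmod 8$, the parity condition forces the first coordinate $e$ of $\vec v$ to be odd, and then the translation part of the square is $\vec v(I+M)\equiv(2e,4f)\pmod 8$ with $2e\equiv 2\pmod 4$. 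This is \emph{not} in $4(\Z/2^k\Z)^2$, so it cannot be cancelled by the translation squares you have produced. (It can be cancelled, but you need an extra commutator such as $[\,((0,1),I),\,((1,0),g_3)\,]=((-2\cdot 5^{-1},0),I)$, whose first coordinate is $\equiv 2\pmod 4$.) The same issue does not arise for your $E_{11}$ construction only because there the roles of the two coordinates are swapped and you are free to take $f$ even. So either carry out this extra commutator argument, or simply invoke the $k=4$ Magma check as the paper does.
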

\begin{proof}
We begin by observing that for $r = k$, $(0, I) \in \Phi(I_k)$. We prove the
result by backwards induction on $r$. 

\textit{Inductive Hypothesis:} $\Phi(I_k)$ contains all pairs $(0,M), M \equiv I \pmod{2^r}$. Let $ g = I + 2^{r-2}N$, and let $h = I + 2^{r-1}N$. If $r \geq 5$, then a straightforward calculation shows that $(0, g) \in I_k$. So, $ (0, g)^2 = (0, g^2) \in \Phi(I_k)$. Therefore, for $r > 3$, 
\[
g^2 = I + 2^{r-1}N + 2^{2r-4}N^{2} \equiv h \pmod{2^{2r-4}}.
\]
By the induction hypothesis, 
$(0, g^2h^{-1}) \in \Phi(I_k)$, and so $(0, h) \in \Phi(I_k)$. 

So, for $k \geq r \geq 4$, all pairs $(0,M), M \equiv I \pmod{2^r} \in \Phi(I_k)$. We will now construct $I_4$, compute $\Phi(I_4)$, and determine if $\Phi(I_4) \in \{(\vec v, M) : \vec v \equiv 0 \pmod{8}, M \equiv I \pmod{8}\}$. A
computation with Magma shows that
$$
I_4 = \left \langle \bmatrix 1 & 1 & 0 \\ 0 & 1 & 0 \\ 0 & 0 & 1 \endbmatrix , \bmatrix 7 & 0 & 0 \\ 2 & 1 & 0 \\ 0 & 0 & 1 \endbmatrix , \bmatrix 5 & 0 & 0 \\ 2 & 1 & 0 \\ 1 & 0 & 1 \endbmatrix \right \rangle. 
$$
We then construct $\Phi(I_4)$ and then $\phi : \Phi(I_4) \rightarrow \GL_3(\mathbb{Z}/8\mathbb{Z})$. We check that $\ker \phi$ has order $64$ and this
proves the desired claim about $\Phi(I_{4})$.

Now, observe that if $\vec{v}_{1} = (2x,2y)$, then $(\vec{v}_{1}, I) \in I_{k}$
and so $(2 \vec{v}_{1}, I) = (\vec{v}_{1}, I)^{2} \in \Phi(I_{k})$,
and so $\Phi(I_{k})$ contains all pairs $(\vec{v}, I)$ with $\vec{v} \equiv \vec{0} \pmod{4}$. Finally, for any matrix $M \equiv I \pmod{8}$, we have
\[
  (\vec{v}_{1}, I) * (0, M) = (\vec{v}_{1}, M) \in \Phi(I_{k})
\]
and this proves the desired claim.
\end{proof}

Finally, we prove the desired surjectivity.

\begin{theorem}
The map $\rho_{E, 2^k} : \Gal(K_k/ \mathbb{Q}) \rightarrow I_k$ is surjective for all $k \geq 3$.
\end{theorem}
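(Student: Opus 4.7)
The plan is to proceed by induction on $k$, taking $k = 3$ as the base case (handled by Lemma~\ref{i3lem}). For the inductive step, I will invoke the standard Frattini criterion: a subgroup $H$ of a finite group $G$ equals $G$ if and only if $H \cdot \Phi(G) = G$. Combined with Lemma~\ref{fratlem}, this reduces the problem to verifying that the image of $\rho_{E,2^{k+1}}$ projects onto $I_k$ under reduction modulo $2^k$.

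More precisely, assume $k \geq 3$ and that $\rho_{E,2^k}$ surjects onto $I_k$. Let $H = \rho_{E,2^{k+1}}(\Gal(K_{k+1}/\Q)) \subseteq I_{k+1}$. Since $K_k \subseteq K_{k+1}$, the restriction map $\Gal(K_{k+1}/\Q) \to \Gal(K_k/\Q)$ is surjective, and it is compatible with the reduction map $\pi : I_{k+1} \to I_k$ sending $(\vec v, M)$ to $(\vec v \bmod 2^k, M \bmod 2^k)$. Together with the inductive hypothesis, this shows $\pi(H) = I_k$, and consequently $H \cdot \ker(\pi) = I_{k+1}$.

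It remains to show $\ker(\pi) \subseteq \Phi(I_{k+1})$. An element of $\ker(\pi)$ is a pair $(\vec v, M)$ with $\vec v \equiv \vec 0 \pmod{2^k}$ and $M \equiv I \pmod{2^k}$. Because $k \geq 3$, we have $2^k \geq 8$, so these conditions automatically force $\vec v \equiv \vec 0 \pmod 4$ and $M \equiv I \pmod 8$; Lemma~\ref{fratlem} then places $(\vec v, M)$ inside $\Phi(I_{k+1})$. Hence $H \cdot \Phi(I_{k+1}) = I_{k+1}$, and the Frattini property gives $H = I_{k+1}$, completing the induction.

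I do not expect a serious obstacle in writing this out: the genuinely delicate work has already been absorbed into the two preceding lemmas (the concrete Magma verification at level $k = 3$, and the inductive computation of $\Phi(I_k)$ that required a separate Magma check at level $k = 4$). All that remains here is to package them via the Frattini criterion and to confirm the compatibility of Galois restriction with matrix reduction, which is straightforward from the way $\rho_{E,2^{k+1}}$ was constructed using the compatible basis of Theorem~\ref{choosebasis}.
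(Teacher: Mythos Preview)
Your argument is correct and is essentially the paper's proof: both use the Frattini criterion together with Lemma~\ref{i3lem} and Lemma~\ref{fratlem}, the only cosmetic difference being that the paper collapses your induction into a single step by observing directly that $\ker(I_k \to I_3) \subseteq \Phi(I_k)$, so non-surjectivity at level $k$ would already force non-surjectivity at level $3$.
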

\begin{proof}
Suppose to the contrary the map is not surjective. 
Lemma~\ref{fratlem} implies that if $\rho_{E,2^{k}}$ is not surjective,
the image lies in a maximal subgroup $M$ which contains the kernel
of the map from $I_{k} \to I_{3}$, and so the image of $\rho_{E,8}$ must lie
in a maximal subgroup of $I_{3}$. This contardicts Lemma~\ref{i3lem},
and shows the map is surjective.
\end{proof}

Now, we indicate the relationship between $\rho_{E,2^{k}}$ and
$\rho_{E',2^{k}}$. Let $\sigma \in \Gal(K_k/\Q)$. If $\beta_{k}$
is chosen so $2^{k} \beta_{k} = P$, then
\begin{align*}
\sigma(A_k) &= aA_k + bB_k,\\
\sigma(B_k) &= cA_k + dB_k,\\
\sigma(\beta_k) &= \beta_k + eA_k + fB_k.
\end{align*}

Applying $\phi$ to these equations, we have
\begin{align*}
\phi(\sigma(A_k)) &= aC_k + 2bD_k = \sigma(\phi(A_k)) = \sigma(C_k),\\
\phi(\sigma(B_k)) &= cC_k + 2dD_k = \sigma(\phi(B_k)) = \sigma(2D_k), \\
\phi(\sigma(\beta_k)) &= \phi(\beta_k) + eC_k + 2fD_k = \sigma(\phi(\beta_k)) = \sigma(\beta_k'),
\end{align*}
where $2^k\beta_k' = R$ on $E'$. Using the relations from Theorem~\ref{choosebasis}, we have that $2D_{k} = D_{k-1}$ and $2C_{k} = C_{k-1}$. This gives
\begin{align*}
\sigma(C_{k-1}) &= aC_{k-1} + 2bD_{k-1}, \\
\sigma(D_{k-1}) &= \frac{c}{2}C_{k-1} + dD_{k-1}.
\end{align*}
Thus, the vector-matrix pair associated with $\rho_{E',2^{k-1}}$ is $(v',M')$, where $\vec v' = \bmatrix e & 2f \endbmatrix$ and $M' = \bmatrix a & 2b \\ \frac{c}{2} & d \endbmatrix$. 

Let $(v,M)$ be a vector-matrix pair in $I_k$. Suppose that $o$ is the
smallest non-negative integer so that $2^o \vec v$ is in the image of
$(I-M)$. Thus there are integers $c_{1}$ and $c_{2}$
(not necessarily unique) so that $2^{o} \vec{v} = c_{1} \vec{x}_{1} +
c_{2} \vec{x}_{2}$, where $\vec{x}_{1}$ and $\vec{x}_{2}$
are the first and second rows of $I-M$. 

\begin{lemma}
Assume that $\det(M-I) \not\equiv 0 \pmod{2^{k}}$. 
If $c_{1} \vec{x}_{1} + c_{2} \vec{x}_{2} = d_{1} \vec{x}_{1} + d_{2} \vec{x}_{2}$,
then $c_{1} \equiv d_{1} \pmod{2}$.
\end{lemma}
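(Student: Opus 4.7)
The plan is to set $a = c_1 - d_1$ and $b = c_2 - d_2$, reducing the statement to showing: if $a \vec{x}_1 + b \vec{x}_2 = \vec{0}$ in $(\Z/2^k \Z)^2$, then $a \equiv 0 \pmod{2}$. The argument naturally splits into two cases based on $M \bmod 2$. Since the image of $\tau$ mod $8$ (hence mod $2$) is generated by the images of $\bmatrix 1 & 1 \\ 0 & 1 \endbmatrix$, $\bmatrix 7 & 0 \\ 2 & 1 \endbmatrix$, and $\bmatrix 5 & 0 \\ 2 & 1 \endbmatrix$, and the latter two reduce to $I$ mod $2$, the matrix $M \bmod 2$ must lie in $\{I, U\}$ where $U = \bmatrix 1 & 1 \\ 0 & 1 \endbmatrix$.

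In the easy case $M \equiv U \pmod{2}$, we have $I - M \equiv \bmatrix 0 & 1 \\ 0 & 0 \endbmatrix \pmod{2}$, so $\vec{x}_1 \equiv (0,1)$ and $\vec{x}_2 \equiv (0,0) \pmod{2}$. Reducing the relation modulo $2$ immediately gives $a(0,1) = \vec{0}$, forcing $a \equiv 0 \pmod{2}$.

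In the harder case $M \equiv I \pmod{2}$, I would lift $M$ to an integer matrix and write $I - M = 2N$ where $N$ has integer entries with rows $\vec{n}_1, \vec{n}_2$. The hypothesized equation becomes $2(a \vec{n}_1 + b \vec{n}_2) \equiv \vec{0} \pmod{2^k}$, i.e.\ $a \vec{n}_1 + b \vec{n}_2 \equiv \vec{0} \pmod{2^{k-1}}$. Suppose for contradiction that $a$ is odd; then $a$ is a unit modulo $2^{k-1}$, so $\vec{n}_1 \equiv -a^{-1} b \, \vec{n}_2 \pmod{2^{k-1}}$. The rows of $N$ are then proportional mod $2^{k-1}$, which forces $\det N \equiv 0 \pmod{2^{k-1}}$. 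Consequently, $\det(I - M) = 4 \det N \equiv 0 \pmod{2^{k+1}}$, and in particular $\det(I - M) \equiv 0 \pmod{2^k}$, contradicting the assumption $\det(M - I) \not\equiv 0 \pmod{2^k}$.

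There is no serious obstacle here: the only real point is recognizing that the constraint on $M$ coming from the explicit generators of the image of $\tau$ makes the mod-$2$ dichotomy very clean, and keeping careful track of the passage between the ring $\Z/2^k \Z$ and the integer lift used to divide $I - M$ by $2$ in the second case. The argument does not use the vector $\vec{v}$ at all; only the row span of $I - M$ and the $2$-adic valuation of its determinant matter.
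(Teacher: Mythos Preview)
Your proof is correct, but it takes a different route from the paper's. The paper gives a short, structure-free argument: since $\det(M-I)\not\equiv 0\pmod{2^{k}}$, the (left) kernel of $I-M$ acting on $(\Z/2^{k}\Z)^{2}$ has order $2^{\ord_{2}(\det(I-M))}\le 2^{k-1}$ (e.g.\ by Smith normal form). The vector $\begin{bmatrix} c_{1}-d_{1} & c_{2}-d_{2}\end{bmatrix}$ lies in that kernel; if $c_{1}-d_{1}$ were odd it would have additive order $2^{k}$, contradicting the bound on the kernel's size.

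By contrast, you exploit the extra input that $M$ comes from the image of $\tau$, so that $M\bmod 2\in\{I,U\}$, and handle the two possibilities separately. Both cases are handled correctly, including the integer lift and the computation $\det(I-M)=4\det N$ in the $M\equiv I$ case. The cost is that your argument is longer and special to this particular Galois image, whereas the paper's argument applies to any $M\in\GL_{2}(\Z/2^{k}\Z)$ satisfying the determinant hypothesis. The benefit, such as it is, is that you avoid invoking the relationship between $\ord_{2}(\det(I-M))$ and $|\ker(I-M)|$ over $\Z/2^{k}\Z$, replacing it with direct mod-$2$ row reductions.
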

\begin{proof}
The assumption on $\det(M-I)$ implies that $\ker (M-I)$ has order
dividing $2^{k-1}$. However, if $c_{1} \vec{x}_{1} + c_{2} \vec{x}_{2}
= d_{1} \vec{x}_{1} + d_{2} \vec{x}_{2}$, then $\begin{bmatrix}
c_{1} - d_{1} & c_{2} - d_{2} \end{bmatrix}$ is an element of $\ker (M-I)$.
If $c_{1} \not\equiv d_{1} \pmod{2}$, then this element has order $2^{k}$,
which is a contradiction.
\end{proof}

The above lemma makes it so we can speak of $c_{1} \bmod 2$ unambigously.
We now have the following result.
\begin{theorem}
Assume the notation above. Let $o'$ be the smallest positive integer
so that $2^{o'} v'$ is in the image of $I-M$. If 
$\det(M-I) \not\equiv 0 \pmod{2^{k-1}}$, then $o \ne o'$ if and only if 
$c_{1}$ is even.
\end{theorem}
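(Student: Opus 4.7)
The plan is to prove the equivalence by directly comparing the defining congruences $2^o \vec v \equiv c_1 \vec x_1 + c_2 \vec x_2 \pmod{2^k}$ (for $E$) and $2^{o'} \vec v' \equiv c_1' \vec x'_1 + c_2' \vec x'_2 \pmod{2^{k-1}}$ (for $E'$), using the explicit relations $\vec v' = (e, 2f)$, $\vec x'_1 = (1-a, -2b)$, and $\vec x'_2 = (-c/2, 1-d)$ between the data of the two curves. Throughout I will use that $a$ and $d$ are odd (forced by $\det M$ being a unit together with $c$ being even in $I_k$) and that $c$ is even, so that $1-a$, $1-d$, and $c$ are all divisible by $2$.

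I would first show $o' \leq o$ unconditionally: given a representation $2^o \vec v = c_1 \vec x_1 + c_2 \vec x_2$, the pair $(c_1, 2c_2)$ represents $2^o \vec v'$ modulo $2^{k-1}$, as a direct component-wise check shows. For the ``if'' direction, suppose $c_1 = 2\tilde c_1$ is even. Since both $c_1$ and $c$ are even, the first component equation is divisible by $2$ and halves to $2^{o-1} e \equiv \tilde c_1(1-a) - c_2(c/2) \pmod{2^{k-1}}$; the second component equation rewrites as $2^{o-1}(2f) \equiv -2\tilde c_1 b + c_2(1-d) \pmod{2^{k-1}}$. Together these give $2^{o-1} \vec v' \equiv \tilde c_1 \vec x'_1 + c_2 \vec x'_2 \pmod{2^{k-1}}$, so $o' \leq o - 1 < o$ and hence $o \neq o'$.

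For the ``only if'' direction, I would suppose $o' < o$ with a minimal representation $(c_1', c_2')$. Doubling the first component lifts it to $2^{o'+1} e \equiv 2c_1'(1-a) - c_2' c \pmod{2^k}$; the second component lifts to $2^{o'+1} f \equiv -2c_1' b + c_2'(1-d) + 2^{k-1} s \pmod{2^k}$ for some integer $s$. Thus
\[
2^{o'+1} \vec v \equiv 2 c_1' \vec x_1 + c_2' \vec x_2 + (0, 2^{k-1} s) \pmod{2^k}.
\]
The hypothesis $\det(I-M) \not\equiv 0 \pmod{2^{k-1}}$ forces the $2$-adic valuation of $\det(I-M)$ to be at most $k-2$, so the cokernel of $I-M$ on $(\Z/2^k\Z)^2$ has exponent at most $2^{k-2}$; hence $2^{k-1}$ annihilates it and $(0, 2^{k-1} s)$ lies in $\operatorname{im}(I-M)$.

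The main obstacle is to show that any such representation $(\alpha, \beta)$ of $(0, 2^{k-1} s)$ in $\operatorname{im}(I-M)$ has first coefficient $\alpha$ even. When $b$ is odd this follows at once from reducing $-\alpha b + \beta(1-d) \equiv 2^{k-1} s \pmod{2^k}$ modulo $2$, since $\beta(1-d)$ and $2^{k-1} s$ are even. When $b$ is even the argument is more delicate: supposing $\alpha$ odd and eliminating $\beta$ using $\alpha(1-a) \equiv \beta c \pmod{2^k}$ yields a relation forcing the $2$-adic valuation of $\det(I-M)$ to be at least $k$, contradicting the hypothesis. Granting this parity claim, the representation $(2c_1' + \alpha, c_2' + \beta)$ of $2^{o'+1} \vec v = 2^o \vec v$ satisfies $c_1 \equiv 2c_1' + \alpha \equiv 0 \pmod 2$ by the preceding lemma, so $c_1$ is even.
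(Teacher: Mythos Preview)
Your proof is correct. The ``if'' direction matches the paper's exactly: from $2^{o}\vec v = c_1\vec x_1 + c_2\vec x_2$ you observe that $(c_1,2c_2)$ represents $2^{o}\vec v'$ over $I-M'$, and when $c_1$ is even, halving gives a representation of $2^{o-1}\vec v'$.

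For the ``only if'' direction your route diverges from the paper's. You lift the $E'$-representation of $2^{o'}\vec v'$ back to a representation of $2^{o'+1}\vec v$ modulo $2^{k}$, absorbing the lifting ambiguity into an error term $(0,2^{k-1}s)$; you then argue separately that this error lies in $\operatorname{im}(I-M)$ with even first coefficient, splitting into the cases $b$ odd and $b$ even (the latter handled by the identity $\alpha\det(I-M)\equiv 2^{k-1}sc\pmod{2^{k}}$ together with $c$ even). This is valid, but it is more work than necessary.

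The paper avoids the lift entirely by staying modulo $2^{k-1}$. From $2^{o-1}\vec v' = d_1\vec y_1 + d_2\vec y_2$ one reads off directly that $2^{o}\vec v \equiv 2d_1\vec x_1 + d_2\vec x_2 \pmod{2^{k-1}}$; comparing this with the original representation $(c_1,c_2)$ reduced modulo $2^{k-1}$, the difference lies in $\ker(I-M)\pmod{2^{k-1}}$, and the determinant hypothesis forces its first coordinate to be even, hence $c_1\equiv 2d_1\pmod 2$. This bypasses both the cokernel argument and the parity case split on $b$. Your approach has the minor advantage of making the equality $o'=o-1$ a consequence rather than an assertion, but at the cost of a longer argument.
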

\begin{proof}
Let $\vec{y}_{1}$ and $\vec{y}_{2}$ be the first two rows of $I-M'$.
A straightforward calculation shows that if $2^{o} \vec{v}
= c_{1} \vec{x}_{1} + c_{2} \vec{x}_{2}$, then
$2^{o} \vec{v}' = c_{1} \vec{y}_{1} + 2c_{2} \vec{y}_{2}$. 
If $c_{1}$ is even, then it follows that $2^{o-1} \vec{v}'
= (c_{1}/2) \vec{y}_{1} + c_{2} \vec{y}_{2}$
and so $o \ne o'$.

Conversely, if $o \ne o'$, then $o' = o-1$ and so $2^{o-1} \vec{v}' = 
d_{1} \vec{y}_{1} + d_{2} \vec{y}_{2}$. We have then that
\[
  2^{o} \vec{v} \equiv 2d_{1} \vec{x}_{1} + d_{2} \vec{x}_{2} \pmod{2^{k-1}}.
\]
So if $\vec{x} = \begin{bmatrix} 2d_{1} \\ d_{2} \end{bmatrix}$
we have $\vec{x} (I-M) \equiv 2^{o} \vec{v} \pmod{2^{k-1}}$.
If there is a vector $\vec{x}'$ with $\vec{x} \not\equiv \vec{x}' \pmod{2}$
so that $\vec{x}' (I-M) \equiv 2^{o} \vec{v} \pmod{2^{k-1}}$, then
$\vec{x} - \vec{x}'$ is in the kernel of $I-M \pmod{2^{k-1}}$. However,
the order of $\vec{x} - \vec{x}'$ is $2^{k-1}$ and this contradicts
the condition on the determinant. This proves the desired result.
\end{proof}

\section{Proof of Theorem~\ref{main}}

Theorem~\ref{somosorder} states that a prime $p$ divides
a term in the Somos-5 sequence if and only if the order
of $(2,2) \in E(\Q)$ is different than the order of $(1,4) \in E'(\Q)$. 
Recall that $o$, the power of two dividing the order of $P$, is the smallest positive integer such that $2^o\vec{v} \in \text{im}(I-M)$, and $o'$ is the
power of two dividing the order of $R$. 

Suppose that $\det(I-M) \not\equiv 0 \pmod{2^{k-1}}$. We have $2^o \vec{v} \in \text{im}(I-M)$ if and only if $c_1\vec{x}_1 + c_2 \vec{x}_2 = 2^o \vec{v}$, where $M = \bmatrix a & b \\ c & d \endbmatrix$, $\vec{x}_1 = \bmatrix 1-a & -b \endbmatrix$, and $\vec{x}_2 = \bmatrix -c & 1-d \endbmatrix$. We know that $o \neq o'$ if and only if $c_{1}$ is even.
For the remainder of the argument, we will consider elements
of $I_{k}$ as $3 \times 3$ matrices $\begin{bmatrix}
a & b & 0 \\
c & d & 0 \\
e & f & 1 \end{bmatrix}$. We let
$I-M = \begin{bmatrix} \alpha & \beta & 0 \\
\gamma & \delta & 0 \\
e & f & 0 \end{bmatrix}$ and define $A = \gamma f - \delta e$, $B = \alpha f - \beta e$, and $C = \alpha \delta - \beta \gamma$. We define
$M_{3}^{0}(\Z/2^{k} \Z)$ to be the set of $3 \times 3$ matrices with entries
in $\Z/2^{k} \Z$ whose third column is zero. We will
use $\ord_{2}(r)$ to denote the highest power of $2$ dividing $r$ for
$r \in \Z/2^{k} \Z$. If $r = 0 \in \Z/2^{k} \Z$, we will interpret
$\ord_{2}(r)$ to have an undefined value, but we will declare
the inequality $\ord_{2}(r) \geq k$ to be true.

Solving the equation
$c_{1} \vec{x}_{1} + c_{2} \vec{x}_{2} = 2^{o} \vec{v}$ using Cramer's
rule gives that $c_{1} C = -2^{o} A$ and $c_{2} C = 2^{o} B$. Assuming
that $c_{1}$ is even and $o > 0$ implies that $c_{2}$ must be odd, and
this implies that $\ord_{2}(B) < \ord_{2}(C)$. Moreover, since the
power of $2$ dividing $c_{1} C$ must be higher than that dividing
$c_{2} C$ it follows that $\ord_{2}(B) < \ord_{2}(A)$. Conversely, if
$\ord_{2}(B) < \ord_{2}(A)$ and $\ord_{2}(B) < \ord_{2}(C)$, then
$o > 0$ and $c_{1}$ is even. Therefore, our goal is counting of
elements of $I_{k}$ with $\ord_2(A) > \ord_2(B)$ and
$\ord_2(C) > \ord_2(B)$. For an $M_{0} \in M_{3}^{0}(\Z/2^{r} \Z)$, define
\begin{align*}
\eta(M_0,r,k) &= \#\left\{M \in M_{3}^{0}(\Z/2^{k} \Z) : M \equiv M_0~\text{mod}(2^{r}), \ord_2(A), \ord_2(C) > \ord_2(B)\right\},\\
\mu(M_0, r) &= \lim_{k \to \infty} \frac{\eta(M_0, r, k)}{|I_3| \cdot 64^{k-3}}.
\end{align*}
Roughly speaking, $\mu(M_{0},r)$ is the fraction of
matrices $M \equiv M_{0} \pmod{2^{r}}$ in $I_{k}$ with the property that 
$\rho_{E,2^{k}}(\sigma_{p}) = M$ implies that $p$ divides a term of the
Somos-5 sequence.

\begin{theorem}
\label{limiteval}
We have
\[
  \lim_{x \to \infty} \frac{\pi'(x)}{\pi(x)}
  = \sum_{M \in I_{3}} \mu(I-M,3).
\]
\end{theorem}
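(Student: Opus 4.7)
The plan is to apply the Chebotarev density theorem at each level $k \geq 3$ and pass to the limit $k \to \infty$. For fixed $k$, I define
\[
T_k = \{g \in I_k : \ord_2(A), \ord_2(C) > \ord_2(B) \text{ for } A,B,C \text{ computed from } I - g\},
\]
\[
U_k = \{g \in I_k : \det(I - \tau(g)) \not\equiv 0 \pmod{2^{k-1}}\}.
\]
By the theorems and discussion immediately preceding Theorem~\ref{limiteval}, for any prime $p \notin \{2,3,17\}$ with $\rho_{E,2^k}(\sigma_p) \in U_k$, $p$ divides a term of the Somos-5 sequence if and only if $\rho_{E,2^k}(\sigma_p) \in T_k$. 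The set of Frobenius elements in $I_k$ coming from Somos-5 primes is a union of conjugacy classes (the condition being intrinsic to $p$), and this set agrees with $T_k$ on all of $U_k$; in particular $T_k \cap U_k$ and $U_k$ are both unions of conjugacy classes.

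Since $\rho_{E,2^k}$ surjects onto $I_k$, Chebotarev yields density $|X|/|I_k|$ for primes whose Frobenius lies in any union of conjugacy classes $X \subseteq I_k$. Applying this to $T_k \cap U_k$ and to $I_k \setminus U_k$ gives the squeeze
\[
\frac{|T_k \cap U_k|}{|I_k|} \;\leq\; \lim_{x \to \infty} \frac{\pi'(x)}{\pi(x)} \;\leq\; \frac{|T_k \cap U_k|}{|I_k|} + \frac{|I_k \setminus U_k|}{|I_k|}.
\]
The error $|I_k \setminus U_k|/|I_k|$ tends to $0$ as $k \to \infty$: this reduces to bounding the proportion of $M \in \GL_2(\Z/2^k)$ satisfying the codimension-one congruence $\det(I - M) \equiv 0 \pmod{2^{k-1}}$, which is $O(2^{-k})$ by a direct count on the entries. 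Since $|T_k|/|I_k|$ and $|T_k \cap U_k|/|I_k|$ differ by at most this error, the density of Somos-5 primes equals $\lim_{k \to \infty} |T_k|/|I_k|$ once existence is established.

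To evaluate this limit, I decompose $T_k$ along the fibers of the surjection $I_k \twoheadrightarrow I_3$, each of size $|I_k|/|I_3| = 64^{k-3}$. For fixed $g_0 \in I_3$, I claim the fiber over $g_0$ coincides with the set of perturbations $g = g_0 + 8h$ where $h$ ranges over $3 \times 3$ matrices with zero third column, i.e., six free entries in $\Z/2^{k-3}$: the $\tau$-image constraint at level $k$ is controlled by the kernel of $\tau(I_k) \to \tau(I_3)$, which has size $16^{k-3}$ and coincides with $\{I + 8N : N \in M_2(\Z/2^{k-3})\}$ by the surjectivity of $\rho_{E,2^k}$; and the $e$-parity constraint depends only on $\det(M_0) \bmod 8$, hence is automatic for every lift. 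Via $g \mapsto I - g$, the fiber is therefore in bijection with the set of $M_3^0(\Z/2^k)$-lifts of $I - g_0$, and since the defining condition of $T_k$ depends only on $I - g$, the bijection preserves $T_k$-membership. Thus
\[
|T_k| = \sum_{g_0 \in I_3} \eta(I - g_0, 3, k),
\]
and dividing by $|I_k| = |I_3| \cdot 64^{k-3}$ and passing the limit through the finite sum yields $\sum_{M \in I_3} \mu(I - M, 3)$.

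The main obstacle is the bound $|I_k \setminus U_k|/|I_k| \to 0$, which requires an explicit count of matrices in $\GL_2(\Z/2^k)$ whose characteristic polynomial has a zero at $1$ of high $2$-adic order. The fiber-counting step is then routine bookkeeping, relying on the identification of the kernel of $\tau(I_k) \to \tau(I_3)$ that follows from the earlier surjectivity result.
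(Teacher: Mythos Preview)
Your proposal is correct and follows essentially the same route as the paper: a Chebotarev sandwich at each level $k$ between the ``good'' set $T_k \cap U_k$ and its union with the ``inconclusive'' set $I_k \setminus U_k$, the latter shrinking to zero (the paper invokes Lemma~\ref{countmat} where you cite an $O(2^{-k})$ direct count), followed by the fiberwise identification of $|T_k|$ with $\sum_{M \in I_3} \eta(I-M,3,k)$ using that membership in $I_k$ is a mod-$8$ condition. One small point: your displayed squeeze should be stated with $\liminf$ and $\limsup$ until existence of the limit is established, exactly as the paper does via its $\epsilon$--$N$ argument.
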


Before we start the proof, we need some lemmas. The first is straightforward,
and we omit its proof.
\begin{lemma}
\label{count}
If $a \in \Z/2^{k} \Z$, then the number of pairs $(x,y) \in (\Z/2^{k} \Z)^{2}$
with $xy \equiv a \pmod{2^{k}}$ is $(\ord_{2}(a) + 1) 2^{k-1}$, where if
$a \equiv 0 \pmod{2^{k}}$, we take $\ord_{2}(a) = k+1$.
\end{lemma}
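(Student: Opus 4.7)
The plan is a direct double-count. Fix $a \in \Z/2^{k}\Z$ and let $N(x)$ denote the number of $y \in \Z/2^{k}\Z$ with $xy \equiv a \pmod{2^{k}}$; I will compute the total $\sum_{x} N(x)$ by summing over $x$ grouped by their $2$-adic valuation.

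First I compute $N(x)$ in terms of valuations. Write $x = 2^{v} u$ with $u$ odd and $0 \leq v < k$. The congruence $2^{v} u y \equiv a \pmod{2^{k}}$ is solvable if and only if $2^{v} \mid a$, i.e., $v \leq \ord_{2}(a)$; when solvable, dividing by $2^{v}$ reduces it to $u y \equiv a/2^{v} \pmod{2^{k-v}}$, which has a unique solution modulo $2^{k-v}$ (since $u$ is a unit) and hence exactly $2^{v}$ solutions modulo $2^{k}$. Thus $N(x) = 2^{v}$ when $v \leq \ord_{2}(a)$ and $N(x) = 0$ otherwise. For $x = 0$, clearly $N(0) = 2^{k}$ if $a \equiv 0 \pmod{2^{k}}$ and $N(0) = 0$ otherwise.

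Next, the number of $x \in \Z/2^{k}\Z$ with $\ord_{2}(x) = v$ equals $2^{k-v-1}$ for $0 \leq v < k$ (the multiples of $2^{v}$ form a subgroup of order $2^{k-v}$, and half of these have strictly higher valuation), and the remaining element is $x = 0$. If $a \not\equiv 0 \pmod{2^{k}}$ with $t = \ord_{2}(a) < k$, summing gives
\[
\sum_{x} N(x) = \sum_{v=0}^{t} 2^{k-v-1} \cdot 2^{v} = (t+1) \cdot 2^{k-1},
\]
which matches the formula. If $a \equiv 0 \pmod{2^{k}}$, the convention $\ord_{2}(a) = k+1$ is exactly what is needed: every $v \in \{0,1,\dots,k-1\}$ contributes $2^{k-1}$ and the $x = 0$ term contributes $2^{k} = 2 \cdot 2^{k-1}$, giving a total of $(k+2)\cdot 2^{k-1} = (\ord_{2}(a)+1)\cdot 2^{k-1}$.

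There is no real obstacle here; the only thing to track carefully is the convention $\ord_{2}(0) = k+1$, which is calibrated precisely to absorb the extra $2^{k}$ solutions arising from $x = 0$ in the degenerate case $a \equiv 0 \pmod{2^{k}}$.
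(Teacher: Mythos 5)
Your proof is correct, and since the paper states only that the lemma is ``straightforward'' and omits the proof entirely, there is nothing to compare against. Your stratification of $x$ by $2$-adic valuation, the count of $2^{k-v-1}$ elements of valuation $v$ and $2^v$ solutions $y$ for each such $x$, and the separate handling of $x = 0$ via the convention $\ord_{2}(0) = k+1$ are all accurate and give a clean derivation of $(\ord_{2}(a)+1)2^{k-1}$.
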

%

\begin{lemma}
\label{countmat}
The number of matrices $M \in M_{2}(\Z/2^{k} \Z)$ with
$\det(M) \equiv 0 \pmod{2^{k}}$ is $3 \cdot 2^{3k-1} - 2^{2k-1}$.
\end{lemma}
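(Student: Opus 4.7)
The plan is to count $4$-tuples $(a, b, c, d) \in (\Z/2^{k}\Z)^{4}$ with $ad \equiv bc \pmod{2^{k}}$ by pivoting on the common value $t = ad = bc$. Let $N(t) := \#\{(x, y) \in (\Z/2^{k}\Z)^{2} : xy = t\}$. Since the pairs $(a, d)$ with $ad = t$ and $(b, c)$ with $bc = t$ are chosen independently, the count we want equals $\sum_{t \in \Z/2^{k}\Z} N(t)^{2}$.

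By Lemma~\ref{count}, $N(t) = (\ord_{2}(t) + 1)\, 2^{k-1}$, with the convention $\ord_{2}(0) = k+1$, so $N(0) = (k+2)\, 2^{k-1}$. I would split the sum by $2$-adic valuation: for each $j$ with $0 \leq j \leq k-1$ there are exactly $2^{k-j-1}$ elements $t \in \Z/2^{k}\Z$ with $\ord_{2}(t) = j$, plus the single element $t = 0$. After reindexing with $i = j+1$, this yields
\[
  \sum_{t} N(t)^{2} = 2^{2k-2}\bigl[(k+2)^{2} + T(k)\bigr], \qquad T(k) := \sum_{i=1}^{k} i^{2}\, 2^{k-i}.
\]

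The main technical step is a closed-form evaluation of $T(k)$; I claim $T(k) = 6 \cdot 2^{k} - 2 - (k+2)^{2}$. The recurrence $T(k+1) = 2 T(k) + (k+1)^{2}$ is immediate from the definition, and a short polynomial identity checks that the proposed closed form satisfies the same recurrence with matching initial value $T(1) = 1$, so induction concludes. (Equivalently, this sum is a finite truncation of $\sum_{i \geq 1} i^{2} x^{i}$ at $x = 1/2$.)

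Substituting this value of $T(k)$, the $(k+2)^{2}$ contributions cancel, and the expression collapses to $2^{2k-2}(6 \cdot 2^{k} - 2) = 3 \cdot 2^{3k-1} - 2^{2k-1}$, which is the desired count. The only real obstacle is the closed-form evaluation of $T(k)$, and that is routine by induction; everything else is a bookkeeping exercise built directly on Lemma~\ref{count}.
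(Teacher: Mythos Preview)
Your proof is correct and follows essentially the same approach as the paper: both count quadruples $(a,b,c,d)$ with $ad\equiv bc$ by summing $N(t)^2$ over $t\in\Z/2^k\Z$ using Lemma~\ref{count}. The paper simply asserts that the resulting sum ``can easily be shown'' to equal $3\cdot 2^{3k-1}-2^{2k-1}$, whereas you carry out that evaluation explicitly via the closed form for $T(k)$.
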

\begin{proof}
We counting quadruples $(a,b,c,d)$ with $ad \equiv bc \pmod{2^{k}}$. By Lemma~\ref{count}, this number
is equal to
\[
  \sum_{\alpha \in \Z/2^{k} \Z} \left((\ord_{2}(\alpha) + 1) 2^{k-1}\right)^{2},
\]
which can easily be shown to equal $3 \cdot 2^{3k-1} - 2^{2k-1}$.
\end{proof}

\begin{proof}[Proof of Theorem~\ref{limiteval}]
For $k \geq 1$, let $G = \Gal(K_{k}/\Q)$ and $\sigma \in G$
have the property that $\sigma = \artin{K_{k}/\Q}{\mathfrak{p}}$ for
some prime ideal $\mathfrak{p} \subseteq O_{K_{k}}$ with
$\mathfrak{p} \cap \Z = (p)$. Assume that $p$ is unramified in
$K_{k}/\Q$ and $E/\F_{p}$ has good reduction at $p$. Let
$M$ be the $3 \times 3$ matrix corresponding to $\rho_{E,2^{k}}(\sigma)$,
and $A$, $B$ and $C$ be the corresponding minors of
$I-M$. Then one of three alternatives occurs:

(a) $B \not\equiv 0 \pmod{2^{k}}$, and a higher power of $2$
divides both $A$ and $C$.

In this situation (the good case), previous results ensure that the order of $P$ in $E(\F_{p})$ is twice the
order of $R$ in $E'(\F_{p})$, and hence $p$ divides some term
in the Somos-5 sequence.

(b) One of $A$ or $C$ is not congruent to $0$ mod $2^{k}$
and the power of $2$ dividing $B$ is equal or higher than for $A$ or $C$.

In this situation (the bad case), previous results ensure
that the order of $P$ in $E(\F_{p})$ is equal to the order of $R$ in $E'(\F_{p})$
and $p$ does not divide any term in the Somos-5 sequence.

(c) $A \equiv B \equiv C \equiv 0 \pmod{2^{k}}$.

In this situation (the inconclusive case), we do not have enough information
to determine if $p$ divides a term in the Somos-5 sequence or not.

Fix $\epsilon > 0$ and choose a $k$ large enough so that both of the following
conditions are satisfied:

(i) $\left|\sum_{M \in I_{3}} \frac{\eta(I-M,3,k)}{|I_{3}| 64^{k-3}} - 
\sum_{M \in I_{3}} \mu(I-M,3)\right| < \epsilon/3$, and

(ii) the fraction of elements $M$ in $I_{3}$ with
$C \equiv \det(I-M) \equiv 0 \pmod{2^{k-1}}$ is less than $\epsilon/3$. 
(This fraction tends to zero by Lemma~\ref{countmat}.)

Let $\mathcal{C} \subseteq I_{k}$ be the collection of ``good'' elements
of $I_{k}$ and let $\mathcal{C}'$ be the collection of ``good or inconclusive''
elements. 

By the statements above, we have that
\[
  \sum_{M \in I_{3}} \mu(I-M,3) - 2\epsilon/3 < \frac{|\mathcal{C}|}{|I_{k}|} 
\]
and
\[
  \frac{|\mathcal{C}'|}{|I_{k}|}
  < \sum_{M \in I_{3}} \mu(I-M,3) + \epsilon/3.
\]
By the Chebotarev density theorem, we have
\[
  \lim_{x \to \infty} \frac{\# \{ p \text{ prime } : p \leq x \text{ is unramified in } K_{k} \text{ and } \artin{K_{k}/\Q}{p} \subseteq \mathcal{C} \}}{\pi(x)} = 
\frac{|\mathcal{C}|}{|I_{k}|},
\]
and the same with $\mathcal{C}'$.

Let $r$ be the number of primes that either ramify in $K_{k}/\Q$
or for which $E/\Q$ has bad reduction. Then
there is a constant $N$ so that if $x > N$, then
\[
  \sum_{M \in I_{3}} \mu(I-M,3) - \epsilon + \frac{r}{\pi(x)} < \frac{\# \{ p \text{ prime } : p \leq x \text{ is unramified in } K_{k} \text{ and } \artin{K_{k}/\Q}{p} \subseteq \mathcal{C} \}}{\pi(x)},
\]
and
\[
\frac{\# \{ p \text{ prime } : p \leq x \text{ is unramified in } K_{k} \text{ and } \artin{K_{k}/\Q}{p} \subseteq \mathcal{C}' \}}{\pi(x)} <
  \sum_{M \in I_{3}} \mu(I-M,3) + \epsilon - \frac{r}{\pi(x)}.
\]
It follows from these inequalities that for $x > N$, then
\[
  -\epsilon < \frac{\pi'(x)}{\pi(x)} - \sum_{M \in I_{3}} \mu(I-M,3) < \epsilon.
\]
This proves that
\[
  \lim_{x \to \infty} \frac{\pi'(x)}{\pi(x)} = \sum_{M \in I_{3}} \mu(I-M,3).
\]
\end{proof}

Our goal is now to compute $\sum_{M \in I_3} \mu(I-M, 3)$. To do
this, we will develop rules to compute $\mu(M,r)$ for any matrix
$M \in M_{3}(\Z/2^{r} \Z)$ whose third column is zero. Observe
that $\mu(M_{0},r) \leq \frac{\# \{ M \in M_{3}^{0}(\Z/2^{r} \Z) : M \equiv M_{0} \pmod{2^{r}} \}}{|I_{3}| \cdot 64^{r-3}} = \frac{1}{2 \cdot 64^{r-1}}$.

Also, if all the entires in $M$ are even,
then $\mu(M,r) = \frac{1}{64} \mu(\frac{M}{2}, r-1)$. This allows us
to reduce to matrices where at least one entry is odd. If $M \in M_{3}^{0}(\Z/2\Z)$ is the zero matrix, we have
\[
  \mu(M,1) = \frac{1}{64} \mu(M/2, 0)
  = \frac{1}{64} \sum_{N \in M_{3}^{0}(\Z/2\Z)} \mu(N,1)
  = \frac{1}{64} \mu(M,1) + \sum_{\substack{N \in M_{3}^{0}(\Z/2\Z) \\ N \ne M}}
  \mu(N,1).
\]
It follows that $\mu(M,1) = \frac{1}{63} \sum_{\substack{N \in M_{3}^{0}(\Z/2\Z) \\ N \ne M}} \mu(N,1)$.

In order to determine $\mu(M_{0},r)$, it is necessary to consider a
matrix $M \in M_{3}(\Z/2^{k} \Z)$ and examine the behavior of
matrices $M' \in M_{3}(\Z/2^{k+1} \Z)$ with $M' \equiv M \pmod{2^{k}}$.
We refer to these as `lifts' of $M$. We define
$A$, $B$ and $C$ to be functions defined on a matrix $M = \begin{bmatrix}
\alpha & \beta & 0 \\
\gamma & \delta & 0 \\
e & f & 0 \end{bmatrix}$, given by $A = \gamma f - \delta e$,
$B = \alpha f - \beta e$ and $C = \alpha \delta - \beta \gamma$.

\begin{theorem}
\label{dencomp}
Let $M = \begin{bmatrix} \alpha & \beta & 0 \\ \gamma & \delta & 0 \\ e & f & 0 \end{bmatrix} \in M_{3}^{0}(\Z/2^{k} \Z)$ and suppose
$A \equiv B \equiv C \equiv 0 \pmod{2^{k}}$.
\begin{enumerate}
\item If $\gamma$ or $\delta$ is odd, then $\mu(M,k) = 0$.
\item If $\gamma$ and $\delta$ are both even, but one of
$\alpha$, $\beta$, $e$ or $f$ is odd, then $\mu(M,k) = \frac{1}{6 \cdot 64^{k-1}}$.
\end{enumerate}
\end{theorem}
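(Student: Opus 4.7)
I treat the two cases separately, using the two polynomial identities
\[
  \alpha A + e C = \gamma B, \qquad \beta A + f C = \delta B,
\]
which are readily verified by direct expansion (they encode the fact that $(A,-B,C)$ pairs to zero against each row of the $3 \times 2$ matrix with entries $\alpha,\beta,\gamma,\delta,e,f$).

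For Case (1), suppose without loss of generality that $\gamma$ is odd; the case ``$\delta$ odd'' is symmetric via the second identity. For any lift with $B \neq 0 \in \Z_2$ and $\ord_2(A), \ord_2(C) > \ord_2(B)$, one would have $\ord_2(\alpha A + eC) \geq \min(\ord_2(A), \ord_2(C)) > \ord_2(B)$, yet $\ord_2(\gamma B) = \ord_2(B)$ since $\gamma$ is a $2$-adic unit, contradicting the first identity. As $\{B=0\}$ cuts out a proper algebraic subset of $\Z_2^6$ and has Haar measure zero, the conditional probability of the ord event vanishes and $\mu(M,k) = 0$.

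For Case (2), I analyze the $64$ lifts of $M$ from level $k$ to level $k+1$. Since $\bar\gamma_0 = \bar\delta_0 = 0$, the new top bits $A'_{\mathrm{top}}$ and $C'_{\mathrm{top}}$ depend only on $(c,d)$ through the $\F_2$-matrix $L = \bigl(\begin{smallmatrix}\bar f_0 & \bar e_0 \\ \bar\beta_0 & \bar\alpha_0\end{smallmatrix}\bigr)$, while $B'_{\mathrm{top}}$ is a non-constant affine form in $(a,b,x,y)$. The matrix $L$ has rank exactly $1$: rank $0$ would force $\bar\alpha_0 = \bar\beta_0 = \bar e_0 = \bar f_0 = 0$, contradicting the Case (2) hypothesis, while rank $2$ would give $\det L = \bar f_0\bar\alpha_0 + \bar e_0\bar\beta_0 = 1$, equivalently $B \not\equiv 0 \pmod 2$, contradicting $B \equiv 0 \pmod{2^k}$. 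Dividing each identity by $2^k$ and reducing modulo $2$ (using $\bar\gamma_0 = \bar\delta_0 = 0$) yields
\[
  \bar\alpha_0\bar A_0 + \bar e_0\bar C_0 = 0, \qquad \bar\beta_0\bar A_0 + \bar f_0\bar C_0 = 0.
\]
A direct check shows $(\bar\alpha_0,\bar e_0)$ and $(\bar\beta_0,\bar f_0)$ both lie in $\ker L^T$ and at least one is nonzero (else Case (2) fails), so they span this $1$-dimensional space; the two relations above then place $(\bar A_0,\bar C_0) \in (\ker L^T)^\perp = \mathrm{image}(L)$. Hence $L\binom{c}{d} = \binom{\bar A_0}{\bar C_0}$ has exactly $|\ker L| = 2$ solutions.

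Consequently, of the $64$ lifts, $2 \cdot 8 = 16$ land in the success state $(A',B',C')_{\mathrm{top}} = (0,1,0)$---for which $\ord_2(B_\infty) = k$ and $\ord_2(A_\infty), \ord_2(C_\infty) \geq k+1$ regardless of any further lifts---another $16$ land in the inconclusive state $(0,0,0)$ and still satisfy the Case (2) hypothesis at level $k+1$ (parities are unchanged and the identities again place $(\bar A_0', \bar C_0') \in \mathrm{image}(L)$), and the remaining $32$ fail because one of $A'_{\mathrm{top}}, C'_{\mathrm{top}}$ equals $1$, forcing the corresponding $\ord_2 = k \leq \ord_2(B_\infty)$. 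Writing $P$ for the conditional success probability, the self-similar recursion $P = \tfrac14 + \tfrac14 P$ gives $P = 1/3$, and converting via $\mu(M,k) = P / (|I_3| \cdot 64^{k-3})$ with $|I_3| = 8192$ yields $\mu(M,k) = 1/(6\cdot 64^{k-1})$. The main obstacle is the Case (2) linear algebra showing $\mathrm{rank}\,L = 1$ and $(\bar A_0, \bar C_0) \in \mathrm{image}(L)$; once these and the resulting $N_{00} = 2$ are in hand, the $16$/$16$/$32$ split and the self-similar recursion close the argument.
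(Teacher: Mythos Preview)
Your proof is correct and reaches the same $16/16/32$ lift count and geometric recursion as the paper, but the packaging in Case~(2) is genuinely different and a bit cleaner.

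For Case~(1), both arguments are the same idea: the paper substitutes $f \equiv e\delta/\gamma$ and $\beta \equiv \alpha\delta/\gamma$ (from $A \equiv C \equiv 0$) to force $B \equiv 0$, while you invoke the syzygy $\alpha A + eC = \gamma B$ directly to get the valuation contradiction. (Your Haar-measure remark about $\{B=0\}$ is harmless but unnecessary: the identity already rules out the good event at every finite level, so $\eta(M_0,r,K)=0$ for all $K$.)

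For Case~(2), the paper splits into two subcases (``$e$ or $f$ odd'' versus ``$\alpha$ or $\beta$ odd''); in each it shows that two of $(A',B',C')$ are independently equidistributed over lifts and that the third is forced to vanish modulo $2^{k+1}$ whenever the appropriate one of the pair does (e.g.\ $A' \equiv 0 \Rightarrow C' \equiv 0$ when $e$ or $f$ is odd). Your argument avoids the case split by observing the block structure: since $\bar\gamma_0 = \bar\delta_0 = 0$, the top bits of $A'$ and $C'$ depend only on the lift variables $(c,d)$ via the rank-$1$ matrix $L$, while the top bit of $B'$ depends only on $(a,b,x,y)$. The syzygies, reduced mod~$2$, then give the consistency condition $(\bar A_0,\bar C_0) \in \operatorname{image}(L)$ uniformly, yielding exactly two $(c,d)$ with $A'_{\text{top}} = C'_{\text{top}} = 0$. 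This makes the independence of $B'_{\text{top}}$ from $(A'_{\text{top}},C'_{\text{top}})$ explicit and handles all parity patterns of $(\alpha,\beta,e,f)$ at once. The paper's substitutions are in fact the syzygies in coordinates, so the two proofs are closely related; yours extracts the underlying linear-algebraic reason.
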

\begin{proof}
Suppose that $M \in M_{3}^{0}(\Z/2^{k} \Z)$ is a matrix with $\gamma$
or $\delta$ odd and with $A \equiv C \equiv 0 \pmod{2^{k}}$.
In the case that $\gamma$ is odd, the congruences $A \equiv 0 \pmod{2^{k}}$
and $C \equiv 0 \pmod{2^{k}}$ imply that $f \equiv \frac{e \delta}{\gamma} \pmod{2^{k}}$ and $\beta \equiv \frac{\alpha \delta}{\gamma} \pmod{2^{k}}$.
We then find that $B \equiv \alpha f - \beta e
\equiv \alpha \left(\frac{e \delta}{\gamma}\right) - \left(\frac{\alpha \delta}{\gamma}\right) e \equiv 0 \pmod{2^{k}}$. It follows that none of the
lifts of $M$ have $\ord_{2}(B) < \min\{ \ord_{2}(A), \ord_{2}(C) \}$ and
so $\mu(M,k) = 0$. A similar argument applies in the case that $\delta$ is odd.

Suppose now that $\gamma$ and $\delta$ are both even. Consider $M'$ to be a 
lift of $M$ mod $2^{k+1}$. Then we 
have
\[
  M' = \begin{bmatrix}
  \alpha' & \beta' & 0\\
  \gamma' & \delta' & 0\\
  e' & f' & 0 \end{bmatrix} =
\begin{bmatrix} \alpha + \alpha_{1} 2^{k} &
  \beta + \beta_{1} 2^{k} & 0 \\
  \gamma + \gamma_{1} 2^{k} & \delta + \delta_{1} 2^{k} & 0 \\
  e + e_{1} 2^{k} & f + f_{1} 2^{k} & 0 \end{bmatrix},
\]
where $\alpha_{1}, \beta_{1}, \gamma_{1}, \delta_{1}, e_{1}, f_{1} \in \F_{2}$.
If $A'$, $B'$ and $C'$ are the values of $A$, $B$, and $C$ associated to $M'$,
then
\begin{align*}
  A' \equiv A + 2^{k} (\gamma_{1} f - \delta_{1} e)
  \pmod{2^{k+1}}\\
  B' \equiv B + 2^{k} (\alpha_{1} f + \alpha f_{1} - \beta_{1} e - \beta e_{1}) \pmod{2^{k+1}}\\
  C' \equiv C + 2^{k} (\alpha \delta_{1} - \beta \gamma_{1}) \pmod{2^{k+1}}. 
\end{align*}

Suppose that $e$ or $f$ is odd. Then the map
$\F_{2}^{6} \to \F_{2}^{2}$ given by $(\alpha_{1}, \beta_{1}, \gamma_{1}, \delta_{1},
e_{1}, f_{1}) \mapsto (\gamma_{1} f - \delta_{1} e, \alpha_{1} f + \alpha f_{1} - \beta_{1} e - \beta e_{1})$
is surjective. It follows that of the $64$ lifts of $M'$,
one quarter have $(A' \bmod 2^{k+1}, B' \bmod 2^{k+1})$
equal to each of $(2^{k},2^{k})$, $(0,2^{k})$, $(2^{k},0)$ and $(0,0)$.
Moreover, if $A' \equiv 0 \pmod{2^{k+1}}$, then we must have $C'
\equiv 0 \pmod{2^{k+1}}$. This is because if $e'$ is odd,
then $\delta' \equiv \frac{\gamma' f'}{e'} \pmod{2^{k+1}}$,
and $\beta' \equiv \frac{\alpha f - B'}{e'} \pmod{2^{k+1}}$. Plugging
these into $C' = \alpha' \delta' - \beta' \gamma'$ gives
$C' \equiv \frac{B' \gamma'}{e'} \pmod{2^{k+1}}$. Since $\gamma'$ is even,
it follows that $C' \equiv 0 \pmod{2^{k+1}}$. A similar argument shows
that $C' \equiv 0 \pmod{2^{k+1}}$ if $f'$ is odd. As a consequence,
of the $64$ lifts of $M$, $32$ have $\mu(M',k+1) = 0$,
$16$ have $\ord_{2}(B') < \ord_{2}(A')$ and $\ord_{2}(B') < \ord_{2}(A')$.
For these, we have $\mu(M',k+1) = \frac{1}{2 \cdot 64^{k}}$.
The remainder have $A' \equiv B' \equiv C' \equiv 0 \pmod{2^{k+1}}$.
It follows that 
\[
  \mu(M,k) = \frac{1}{2 \cdot 64^{k-1}} \cdot \frac{1}{4}
+ \sum_{\substack{M' \equiv M \pmod{2^{k+1}} \\ 
  A' \equiv B' \equiv C' \equiv 0 \pmod{2^{k+1}}}} \mu(M',k+1).
\]
Applying the above argument repeatedly gives
\[
  \mu(M,k)
  = \frac{1}{2 \cdot 64^{k-1}}
  \cdot \left(\frac{1}{4} + \frac{1}{16} + \cdots
  + \frac{1}{4^{\ell}}\right)
  + \sum_{\substack{M' \equiv M \pmod{2^{k+\ell}}\\
  A' \equiv B' \equiv C' \equiv 0 \pmod{2^{k+\ell}}}} \mu(M',k+\ell).
\]
Using the bound $0 \leq \mu(M',k+\ell) \leq \frac{1}{2 \cdot 64^{k+\ell-1}}$,
noting that the sum contains $16^{\ell}$ terms, and
taking the limit as $\ell \to \infty$ yields that
$\mu(M,k) = \frac{1}{2 \cdot 64^{k-1}} \sum_{r=1}^{\infty} \frac{1}{4^{r}}
= \frac{1}{6 \cdot 64^{k-1}}$.

The case when $\alpha$ or $\beta$ is odd is very similar. In that
case, one can show that the $64$ lifts $M'$ have
$(B' \bmod 2^{k+1}, C' \bmod 2^{k+1})$ divided equally between
$(2^{k},2^{k}), (0,2^{k}), (2^{k},0)$ and $(0,0)$, and that
$C' \equiv 0 \pmod{2^{k+1}}$ implies that $A' \equiv 0 \pmod{2^{k+1}}$.
Again, one quarter of the lifts $M'$ have $B' \equiv 2^{k} \pmod{2^{k+1}}$
and $A' \equiv C' \equiv 0 \pmod{2^{k+1}}$, and
$\mu(M,k) = \frac{1}{6 \cdot 64^{k-1}}$.  
\end{proof}

Let $M \in M_{3}^{0}(\Z/8\Z)$ be the zero matrix. We have that
$\mu(M,3) = \frac{1}{64^{2}} \mu(M,1) = 
  \frac{1}{63} \cdot \frac{1}{64^{2}} \sum_{N \in M_{3}^{0}(\Z/2\Z)} \mu(N,1)$.
Of the $63$ nonzero matrices in $M_{3}^{0}(\Z/2\Z)$ we find that
$6$ have $B$ odd and $A$ and $C$ even, while $36$ have $A$ or $C$ odd.
Of the remaining $21$, there are $12$ that have $\gamma$ or $\delta$ odd,
and the remaining $9$ have $\gamma$ and $\delta$ both even.
It follows that
\[
  \mu(M,3) = \frac{1}{63} \cdot \frac{1}{64^{2}}
  \cdot \frac{1}{2} \cdot \left[ 6 + 36 \cdot 0 + 12 \cdot 0 + 9 \cdot \frac{1}{3} \right]
  = \frac{1}{8192} \cdot \frac{1}{7} = \frac{1}{57344}.
\]
(Note that in the denominator of $\mu(N,1)$ we have $|I_{3}| 64^{-2} = 8192 \cdot (1/4096) = 2$.)

For each of the $8191$ non-identity elements $M$ of $I_{3}$,
we divide $I-M$ by the highest power of $2$ dividing all of the elements, say $2^{r}$.
In $3754$ cases, we have $\ord_{2}(B) < \ord_{2}(A)$ and $\ord_{2}(B) < \ord_{2}(C)$. For each of these, $\mu(I-M,3) = \frac{1}{8192}$.

In $4036$ cases, we have $\ord_{2}(B) \geq \ord_{2}(A)$ or
$\ord_{2}(B) \geq \ord_{2}(C)$ and not all of $A$, $B$ and $C$
are congruent to $0$ modulo $2^{3-r}$. For each of these, $\mu(I-M,3) = 0$.

In $365$ cases, we have $A \equiv B \equiv C \equiv 0 \pmod{2^{3-r}}$
and $\gamma$ and $\delta$ are both even. In each of these cases,
$\mu(I-M,3) = \frac{1}{3 \cdot 8192}$ by Theorem~\ref{dencomp}.

In the remaining $36$ cases, we have $A \equiv B \equiv 0 \pmod{2^{3-r}}$
and one of $\gamma$ or $\delta$ is odd. By Theorem~\ref{dencomp},
$\mu(I-M,3) = 0$.

It follows that
\[
  \sum_{M \in I_{3}} \mu(I-M,3) = 3754 \cdot \frac{1}{8192}
  + 365 \cdot \frac{1}{3 \cdot 8192} + \frac{1}{57344}
  = \frac{5087}{10752}.
\]
This concludes the proof of Theorem~\ref{main}.

\bibliographystyle{plain}
\bibliography{somos5ref}

\end{document}